\documentclass[12pt]{amsart}
\usepackage{hyperref}
\usepackage{amsthm,vmargin}
\usepackage{amsmath,amsfonts,xypic,mathrsfs}
\usepackage{epigraph}
\begin{document}
\newcommand{\Q}{{\mathbb Q}}
\newcommand{\C}{{\mathbb C}}
\newcommand{\R}{{\mathbb R}}
\newcommand{\N}{{\mathbb N}}
\newcommand{\Z}{{\mathbb Z}}
\newcommand{\F}{{\mathbb F}}
\newcommand{\bQ}{{\bar{\Q}}}
\newcommand{\brho}{{\bar{\rho}}}
\newcommand{\abs}[1]{\left|#1\right|}
\newcommand{\tr}{{\rm Tr}}
\newcommand{\image}{{\rm Image}}
\newcommand{\Gl}{{\rm GL}}
\newcommand{\Sl}{{\rm SL}}
\newcommand{\Gal}{{\rm Gal}}
\newcommand{\Li}{{\rm Li}}
\newcommand{\sA}{{\mathscr{A}}}
\newcommand{\sP}{{\mathscr{P}}}
\newcommand{\sS}{{\mathscr{S}}}
\newcommand{\sW}{{\mathscr{W}}}
\renewcommand{\wp}{{\mathfrak p}}
\renewcommand{\P}{{\mathbb P}}
\renewcommand{\O}{{\mathcal O}}
\newcommand{\Pic}{{\rm Pic\,}}
\newcommand{\Ext}{{\rm Ext}\,}
\newcommand{\rank}{{\rm rk}\,}
\newcommand{\sbull}{{\scriptstyle{\bullet}}}
\newcommand{\bX}{X_{\overline{k}}}
\newcommand{\ch}{\operatorname{CH}}
\newcommand{\tors}{\text{tors}}
\newcommand{\cris}{\text{cris}}
\newcommand{\alg}{\text{alg}}
\newcommand{\tX}{{\tilde{X}}}
\newcommand{\tL}{{\tilde{L}}}
\newcommand{\Hom}{{\rm Hom}}
\newcommand{\spec}{{\rm Spec}}
\let\isom=\simeq
\let\rk=\rank
\let\tensor=\otimes
\newcommand{\X}{\mathfrak{X}}
\newcommand{\mydot}{{\small{\bullet}}}
\newcommand{\congruent}{\equiv}
\renewcommand{\mod}{\mathop{{\rm mod}}}

\newtheorem{theorem}[equation]{Theorem}      % (If you want theorem numbered
\newtheorem{lemma}[equation]{Lemma}          %
\newtheorem{corollary}[equation]{Corollary}  %       goes for lemmas, etc.)
\newtheorem{proposition}[equation]{Proposition}
\newtheorem{scholium}[equation]{Scholium}

\theoremstyle{definition}
\newtheorem{conj}[equation]{Conjecture}
\newtheorem*{example}{Example}
\newtheorem{question}[equation]{Question}

\theoremstyle{definition}
\newtheorem{remark}[equation]{Remark}

\numberwithin{equation}{subsection}

\title{Remarks on the Fourier coefficients of modular forms}
\author{Kirti Joshi}
\address{Math. department, University of Arizona, 617 N Santa Rita, Tucson
85721-0089, USA.} \email{kirti@math.arizona.edu}
\date{Version no. \jobname of October 3, 2009}

\begin{abstract}
We consider a variant of a question of N.~Koblitz. For an elliptic
curve $E/\Q$ which is not $\Q$-isogenous to an elliptic curve with
torsion, Koblitz has conjectured that there exists infinitely many
primes $p$ such that $N_p(E)=\#E(\F_p)=p+1-a_p(E)$ is also a prime.
We consider a variant of this question. For a newform $f$, without
CM, of weight $k\geq 4$, on $\Gamma_0(M)$ with trivial Nebentypus
$\chi_0$ and with integer Fourier coefficients, let
$N_p(f)=\chi_0(p)p^{k-1}+1-a_p(f)$ (here $a_p(f)$ is the
$p^{th}$-Fourier coefficient of $f$). We show under GRH and Artin's
Holomorphy Conjecture that there are infinitely many $p$ such that
$N_p(f)$ has at most $[5k+1+\sqrt{\log(k)}]$ distinct prime factors.
We give examples of about hundred forms to which our theorem
applies.
\end{abstract}

\maketitle \epigraph{To Pramodini J. Joshi,}{\emph{in memoriam
(1924-2009)}}

%\tableofcontents

\section{Introduction}
\subsection{Koblitz' question for elliptic curves} For a natural number $n$,
let $\omega(n)$ denote the number of distinct prime factors of $n$
and let $\Omega(n)$ be the number of primes, dividing $n$ counted
with multiplicities.  Let $E/\Q$ be an elliptic curve and, for a
prime $p$ of good reduction of $E$. Let $N_p(E)=p+1-a_p(E)$ be the
number of points on the reduction of $E$ modulo $p$. Assume that $E$
is not $\Q$-isogenous to an elliptic curve with torsion.  In
\cite{koblitz88} the following question was studied: how often is
$N_p(E)$ a prime? In \cite{koblitz88} it was conjectured that this
happens infinitely often. Several authors (see \cite{miri01,murty04,
cojocaru05,steuding05,steuding05e}) have recently studied this
question.

\subsection{The example of the Delta function} Motivated by this question, we study a similar problem for
modular forms. Specifically, let $f(q)=\sum_{n=1}^\infty a_n(f)q^n$
be a cuspidal, normalized, new, Hecke eigenform on $\Gamma_0(M)$ of
weight $k\geq 2$ (here $M\geq 1$ is an integer) and Nebentypus
$\chi$, without complex multiplication and with integer Fourier
coefficients. Then we are interested in the number of prime factors
of $N_p(f)=\chi(p)p^{k-1}+1-a_p(f)$. In particular one can study
this for the Ramanujan modular form $\Delta(q)$ (see
\cite{ramanujan16}) of weight twelve
$$\Delta(q)=\sum_{n=1}^n\tau(n)q^n=q\prod_{n=1}^\infty\left(1-q^n\right)^{24},$$
on $\Sl_2(\Z)$. The example of Ramanujan form already shows that for
a given newform the numbers $N_p(f)=\chi(p)p^{k-1}+1-a_p(f)$ may be
composite for all primes $p$. Indeed Ramanujan's congruences
\cite{serre73} for $\tau(n)$ show that for all primes $p\geq 5$
\begin{equation}
N_p(\Delta)\congruent0\mod(2^5\cdot 3\cdot 691),
\end{equation}
so that we have $\omega(N_p(\Delta))\geq 3$ and
$\Omega(N_p(\Delta))\geq 7$ for all $p\geq 5$; and note that
$N_2(\Delta)=2^{11}+1-\tau(2)=3\cdot 691;
N_3(\Delta)=3^{11}+1-\tau(3)=2\cdot 691$. So we have in any case
that $\omega(N_p(\Delta))\geq 2$, thus the obvious variant of the
above question is trivially false for the Ramanujan modular form. In
\ref{refined-koblitz} we suggest a refined version of Koblitz's
conjecture which includes the behavior of the sort seen for
Ramanujan's Delta function.

\subsection{The main question}
However given a newform $f$ of weight $k$ on $\Gamma_0(M)$ and
Nebentypus $\chi$, with rational or integral Fourier coefficients,
we may still ask if the following weaker version of Koblitz'
question is still true: \emph{do there exist infinitely many primes
$p$ such that $N_p(f)=\chi(p)p^{k-1}+1-a_p(f)$ has bounded number of
prime factors?}. In this note we will prove that this is indeed the
case if we assume the Generalized Riemann Hypothesis and the
generalized Riemann hypothesis. We prove the following theorems:
\begin{theorem}\label{main}
Assume GRH and Artin's Holomorphy Conjecture for Artin $L$-functions
and suppose that $f$ is a newform satisfying hypothesis \ref{form}.
Let $$N_p(f)=\chi(p)p^{k-1}+1-a_p(f).$$  Then
\begin{enumerate}
\item there exists infinitely many primes $p$ such that
 $$\omega(N_p(f))\leq [5k+1+\sqrt{\log(k)}],$$
\item and there exists infinitely many primes $p$ such that
$$\Omega(N_p(f))\leq [8k+1+\sqrt{\log(k)}].$$
\end{enumerate}
\end{theorem}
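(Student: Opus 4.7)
The plan is to translate the divisibility $\ell\mid N_p(f)$ into a condition on Frobenius conjugacy classes in the mod-$\ell$ Galois representation attached to $f$, compute the associated local densities via an effective form of Chebotarev's theorem (valid under GRH and Artin's Holomorphy Conjecture), and then feed these inputs into a linear lower-bound sieve. Concretely, let $\rho_{f,\ell}\colon\Gal(\bQ/\Q)\to\Gl_2(\F_\ell)$ be Deligne's mod-$\ell$ representation attached to $f$. For $p\nmid \ell M$, the characteristic polynomial of $\rho_{f,\ell}(\mathrm{Frob}_p)$ is $X^2-a_p(f)X+p^{k-1}\pmod\ell$, so $\ell\mid N_p(f)$ exactly when $\rho_{f,\ell}(\mathrm{Frob}_p)$ has $1$ as an eigenvalue, i.e.\ when $\tr g \equiv \det g + 1$ in $\F_\ell$. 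Since $f$ has no CM, Ribet's theorem implies that the image $G_\ell:=\image(\rho_{f,\ell})$ contains $\Sl_2(\F_\ell)$ for almost all $\ell$. The locus $\{g\in G_\ell : \tr g = \det g + 1\}$ is conjugation-invariant and has size $\sim |G_\ell|/\ell$, so the density of primes $p$ with $\ell\mid N_p(f)$ is $\delta(\ell) = 1/\ell + O(1/\ell^2)$.

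Next I would invoke the effective Chebotarev estimate of Lagarias-Odlyzko-Serre for the fixed field of $\ker(\rho_{f,\ell})$. Under GRH together with Artin's Holomorphy Conjecture for the Artin $L$-functions attached to the irreducible constituents of $\rho_{f,\ell}$, one obtains
\[
\#\{p\leq x : \ell\mid N_p(f)\} = \delta(\ell)\,\pi(x) + O\!\left(\ell^{O(1)}\sqrt{x}\,\log(\ell M x)\right),
\]
with the error uniform in $\ell$. Combining these estimates multiplicatively shows that the sequence $\sA = \{N_p(f):p\leq x\}$ has squarefree level of distribution $D = x^{1/2-\varepsilon}$. Since Deligne's bound gives $|N_p(f)|\leq 2p^{k-1}$, the associated sieve problem has dimension $\kappa = 1$. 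A Jurkat-Richert linear lower-bound sieve with sifting parameter $z = x^\theta$ therefore produces $\gg \pi(x)/\log z$ primes $p\leq x$ such that every prime divisor of $N_p(f)$ exceeds $z$. For such $p$, $\omega(N_p(f)) \leq \log N_p(f)/\log z \leq (k-1)/\theta$; choosing $\theta$ slightly smaller than $(k-1)/[5k+1+\sqrt{\log k}]$ (well within the sieve threshold $\theta < 1/2$) proves part (1). For part (2) the same scheme is used, but one additionally sifts out prime squares $\ell^2\mid N_p(f)$ to control repeated prime divisors contributing to $\Omega$; balancing the two sieving parameters yields the weaker constant $8k$ in place of $5k$.

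The principal technical obstacle is securing the Chebotarev error uniformly in $\ell$. The Artin conductors of the irreducible constituents of $\rho_{f,\ell}$ grow with $\ell$, and since $\image(\rho_{f,\ell})\supseteq\Sl_2(\F_\ell)$ for almost all $\ell$, these representations are generally non-solvable and not known unconditionally to be automorphic, so one cannot appeal to standard $\Gl_n$ analytic machinery. Artin's Holomorphy Conjecture is exactly the input needed to apply the Lagarias-Odlyzko analysis to the associated $L$-functions. A secondary difficulty is the numerical optimization: one must balance the Mertens asymptotic $\prod_{\ell\leq z}(1-\delta(\ell))\sim e^{-\gamma}/\log z$ against the efficiency of the linear sieve and the Ramanujan bound on $|N_p(f)|$ to obtain the clean form $[5k+1+\sqrt{\log k}]$ (respectively $[8k+1+\sqrt{\log k}]$) of the stated bound.
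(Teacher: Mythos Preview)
Your overall architecture (translate $\ell\mid N_p(f)$ into a Frobenius eigenvalue-$1$ condition, compute the local density $\delta(\ell)\sim 1/\ell$, apply effective Chebotarev, and then a linear sieve) matches the paper. However, there is a genuine gap in the quantitative input, and because of it your sieve step does not produce the stated bound.

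The error is the claimed level of distribution $D=x^{1/2-\varepsilon}$. For a square-free modulus $d=\ell_1\cdots\ell_r$ the relevant field is the compositum of the $K_{f,\ell_i}$, whose Galois group sits in $\prod_i G_{\ell_i}$ and has order of size $\asymp d^{3}$ (not $O(1)$). The effective Chebotarev theorem of Murty--Murty--Saradha, under GRH \emph{and} Artin's Holomorphy Conjecture, gives
\[
|R_d|\;\ll\; d^{3/2}\,x^{1/2}\log(dx),
\]
(and under GRH alone the exponent degrades to $d^{3}$). Summing $\mu(d)^2 3^{\omega(d)}|R_d|$ over $d<x^{\alpha}$ is then $\ll x^{1/2+(5/2)\alpha+\varepsilon}$, so the admissible level is only $\alpha<1/5$, not $1/2-\varepsilon$. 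Your ``$\ell^{O(1)}$'' hides exactly this loss.

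With the correct level $\alpha<1/5$, your unweighted Jurkat--Richert approach collapses: the lower-bound sieve requires roughly $z<D^{1/2}=x^{\alpha/2}\approx x^{1/10}$, so the resulting estimate is only $\omega(N_p(f))\le (k-1)/\theta\approx 10(k-1)$, well above $5k+1+\sqrt{\log k}$. The paper closes this gap by using Richert's \emph{weighted} sieve (Halberstam--Richert, Theorem~9.1), with the explicit choices $\alpha=(k-1)/(5k)$, $u=(5k+1)/(k-1)$, $v=4/\alpha$, and weight parameter $\lambda=1/\sqrt{\log k}$; the weighted sieve pushes the bound down essentially to $(k-1)/\alpha\approx 5k$, and the otherwise mysterious $\sqrt{\log k}$ in the statement is precisely the contribution $1/\lambda$ from this weight. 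Part~(2) likewise requires a specific reparametrization ($u=(8k+1)/(k-1)$, $v=16k/(k-1)$) together with the $\ell^2$-count to ensure $u>8$, not merely ``balancing'' two parameters.
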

\begin{corollary}\label{ramcor}
Assume GRH and Artin's Holomorphy Conjecture for Artin
$L$-functions. Let $\tau(n)$ be the Ramanujan  $\tau$-function.
Then there are infinitely many prime $p$ such that
$$
3\leq \omega(p^{11}-\tau(p)+1)\leq [61+\sqrt{\log(12)}]=[62.57\cdots]=62
$$ and an infinity of primes $p$ such that $$7\leq \Omega(N_p(\Delta))\leq [98.57\cdots]=98. $$
\end{corollary}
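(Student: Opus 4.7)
The plan is to deduce Corollary \ref{ramcor} as a direct specialization of Theorem \ref{main} to $f=\Delta$, combining it with the elementary lower bounds already observed in the discussion of Ramanujan's congruences.

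First I would verify that the Ramanujan cusp form $\Delta$ satisfies the hypothesis on newforms required by Theorem \ref{main}: it is the unique normalized cuspidal Hecke eigenform of weight $k=12$ on $\Sl_2(\Z)=\Gamma_0(1)$, its Nebentypus is trivial, its Fourier coefficients $\tau(n)$ are integers, and by a classical result of Serre--Swinnerton-Dyer it has no complex multiplication. Hence Theorem \ref{main} applies, yielding infinitely many primes $p$ with $\omega(N_p(\Delta))\leq [5k+1+\sqrt{\log k}]$ and infinitely many primes with $\Omega(N_p(\Delta))\leq [8k+1+\sqrt{\log k}]$.

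Next I would plug in $k=12$ and compute the two constants. Using $\log 12 \approx 2.4849$, one gets $\sqrt{\log 12}\approx 1.576$, so
\[
5\cdot 12 + 1 + \sqrt{\log 12} \approx 62.576, \qquad 8\cdot 12 + 1 + \sqrt{\log 12} \approx 98.576,
\]
whose integer parts are $62$ and $98$ respectively. This gives the upper bounds claimed in the corollary.

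For the lower bounds, I would simply invoke the Ramanujan congruences already recorded in the introduction: for every prime $p\geq 5$ one has $N_p(\Delta)\equiv 0 \pmod{2^5\cdot 3\cdot 691}$, which forces $\omega(N_p(\Delta))\geq 3$ and $\Omega(N_p(\Delta))\geq 5+1+1=7$. Combining this with the upper bounds from Theorem \ref{main} gives the full chain of inequalities asserted in the corollary. There is essentially no obstacle here --- the real work is entirely inside Theorem \ref{main}, and the corollary reduces to an arithmetic verification plus the classical congruences for $\tau(p)$.
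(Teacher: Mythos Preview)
Your proposal is correct and follows essentially the same route as the paper: specialize Theorem~\ref{main} (equivalently Theorem~\ref{main2}) to $f=\Delta$ with $k=12$, compute $[5\cdot 12+1+\sqrt{\log 12}]=62$ and $[8\cdot 12+1+\sqrt{\log 12}]=98$, and append the lower bounds $\omega(N_p(\Delta))\geq 3$, $\Omega(N_p(\Delta))\geq 7$ coming from the Ramanujan congruences $N_p(\Delta)\equiv 0\pmod{2^5\cdot 3\cdot 691}$ for $p\geq 5$. The only extra detail you supply beyond the paper is the explicit verification that $\Delta$ satisfies hypothesis~\ref{form}, which is harmless and welcome.
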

The proofs are along the lines of \cite{steuding05,steuding05e}. We
use a weighted sieve to arrive at this result. We prove more precise
versions of Theorems~\ref{main}, Theorem~\ref{ramcor} in
Theorem~\ref{main2} and Theorem~\ref{ramcor2} where we provide lower
bounds for the set of such primes. These results should be compared
with the known results for prime divisors of values of a fixed
irreducible polynomial (see \cite{halberstam-sieve}). In
Theorem~\ref{upperbound} we provide an upper bound which is of the
order of magnitude comparable to the lower bounds of
Theorems~\ref{main2} and Theorem~\ref{ramcor2}. In
section~\ref{examples} we give a table of about 100 modular forms of
various levels and weights where our theorem applies.

\subsection{Normal order of $N_p(f)$} In contrast to the above results, in Theorem~\ref{erdoskac}
we show that the average behavior of $\omega(N_p(f))$ is similar to
the average behavior of $\omega(p-1)$. More precisely we show (on
GRH) that $\omega(N_p(f))$ has normal order $\log\log(p)$. In fact a
version of Erdos-Kac Theorem holds for $\omega(N_p(f))$. This result
shows (again) that the set of primes $p$ for which $\omega(N_p(f))$
is bounded is of zero density (on GRH).

\subsection{Odds and ends}
We end with some remarks about the existence of forms $f$ for which
$\omega(N_p(f))\geq 2$ for all $p$. In fact we show (see
Remark~\ref{aremark}) that there exists a sequence of weights
$k_i\to \infty$ as $i\to\infty$ and a normalized cusp form ({\emph
but need not be eigenform}) $f_{k_i}$ of weight $k_i$ on $\Sl_2(\Z)$
with integer coefficients such that $\omega(N_p(f_{k_i}))\geq 2$ for
all $i$ and for all primes $p\geq 2$. So the numbers
$\omega(N_p(f))\geq 2$ for all $p\geq 2$ for infinitely many cusp
forms (normalized, but not necessarily eigenforms). In
Remark~\ref{bernoullibound} we record a bound for the number of
primes dividing the numerator of $B_n/n$.

\subsection{Acknowledgements}
We would like to thank M. Ram Murty for his comments. We have added
a refined version of Koblitz' conjecture in \ref{refined-koblitz} in
response to his question. We are grateful to the referee for many
suggestions and corrections. The section \ref{ongrh} on the results
assuming GRH (as opposed to GRH and Artin Holomorphy conjecture) was
added at the referee's suggestion.

\section{Statement of the main results}
\subsection{Hypothesis on our forms}\label{form} Let $k\geq 4$ be an integer. Let
\begin{equation}
    f(z)=\sum_{n=1}^\infty a_n(f)q^n
\end{equation} be a new, normalized, cuspidal,
Hecke eigenform of weight $k$ on $\Gamma_0(M)$ with Nebentypus
$\chi$. Following the usual convention we will simply call such a
form a \emph{newform on $\Gamma_0(M)$ with Nebentypus $\chi$}.
Throughout this paper we will assume that $f$ does not have complex
multiplication, CM for short, (see \cite{ribet76} for definition and
properties) and that $f$ has integer Fourier coefficients, i.e., we
will assume that $a_n(f)\in\Z$ for all $n\geq 1$. Since the field of
Hecke eigenvalues contains the field of values of $\chi$ (see
\cite{ribet76}), our assumptions restrict $\chi$ to be of order at
most two. Further by \cite[Remark 2, page 34]{ribet76} any form with
coefficient field $\Q$ and with Nebentypus of order two has CM.
\emph{So our assumption entails that $f$ is a newform of weight
$k\geq 4$ without complex multiplication on $\Gamma_0(M)$, with
trivial Nebentypus $\chi_0$ and with rational Fourier coefficients}.
We will work with such forms  throughout this paper.
 We note that if the level $M$ is square-free, then our assumption
that the form does not have complex multiplication is automatic (see
\cite{ribet76}). In section~\ref{examples} we give examples of about
hundred modular forms of square-free levels $\leq 21$ and weights
$\geq 4$ where our results apply. Our list is by no means
exhaustive. We note that we assume that $f$ has weight $k\geq 4$. A
form of weight two satisfying our hypothesis corresponds to elliptic
curves over $\Q$ (by the modularity theorem  (see \cite[Theorem
A]{breuil01} and references therein)) and has been covered by
\cite{miri01,murty04, cojocaru05,steuding05,steuding05e}.

A typical example of a form which satisfies the above hypothesis and
of particular interest to us is the Ramanujan cusp form of weight twelve given by
\begin{equation}\label{ramform}
  \Delta(z)=q\prod_{n=1}^\infty(1-q^n)^{24}=\sum_{n=1}^\infty\tau(n)q^n.
\end{equation}
The unique normalized cuspidal eigen forms $\Delta_k$, of weights $k$ for $k=12,16,18,20,22,26$ (with $\Delta_{12}=\Delta$) on $\Sl_2(\Z)$ are also forms which satisfy our hypothesis.

By the results of Deligne (\cite{deligne68}) we have for $f(z)$ satisfying our hypothesis
and for all prime $p\not|M$, that
\begin{equation}\label{deligne-ramanujan}
\abs{a_p(f)}\leq 2p^{(k-1)/2},
\end{equation}
and in particular for the Ramanujan modular form \eqref{ramform},
the famous assertion of Ramanujan:
\begin{equation}\label{ramest}
\abs{\tau(p)}\leq 2p^{(11)/2}.
\end{equation}

\subsection{The main result} For a form $f$ as in \ref{form}, we write
\begin{equation}\label{lfunction}
L(f,s)=\sum_{n=1}^\infty \frac{a_n(f)}{n^s}.
\end{equation}
Then by \cite{deligne68} we know that $L(f,s)$ converges for $Re(s)>\frac{k-1}{2}+1$  and has an
analytic continuation to a holomorphic function to all of $\C$. We now state the more precise
forms of the theorems stated in the introduction, which we will prove in the subsequent
subsections.

\begin{theorem}\label{main2}
Assume GRH and Artin's Holomorphy Conjecture for Artin $L$-functions
and suppose that $f$ is a newform satisfying hypothesis~\ref{form}.
\begin{enumerate}
\item  There exists a positive constant $c_1(f)$ depending on $f$ such that for $X\gg0$, one has
$$\#\left\{p\leq X: \omega(N_p(f))\leq [5k+1+\sqrt{\log(k)}]\right\}\geq c_1(f)\frac{X}{\log(X)^2}$$
\item There exists a positive constant $c_2(f)$ depending on $f$ such that for $X$ sufficiently large, one has
$$\#\left\{p\leq X: \Omega(N_p(f))\leq [8k+1+\sqrt{\log(k)}]\right\}\geq c_2(f)\frac{X}{\log(X)^2}$$
\end{enumerate}
\end{theorem}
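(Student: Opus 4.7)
The plan is to apply a weighted linear sieve to the sequence
\[
\sA(X) \;=\; (N_p(f))_{p\le X,\; p\nmid M},
\]
following Miri--Murty and Steuding--Weng's treatment of the weight-two (elliptic curve) case. The input for the sieve is an effective Chebotarev density estimate for $|\sA_\ell|=\#\{p\le X:\ell\mid N_p(f)\}$, valid uniformly for primes $\ell$ up to a level of distribution $X^\theta$ for some $\theta>0$.

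Since $\chi_0$ is trivial, the divisibility $\ell\mid N_p(f)$ reads $a_p(f)\equiv 1+p^{k-1}\pmod\ell$. The mod-$\ell$ Galois representation $\bar\rho_{f,\ell}\colon\Gal(\bar\Q/\Q)\to\Gl_2(\F_\ell)$ satisfies $\det\bar\rho_{f,\ell}(\text{Frob}_p)=p^{k-1}\pmod\ell$, so the condition becomes that $\bar\rho_{f,\ell}(\text{Frob}_p)$ has characteristic polynomial $(X-1)(X-p^{k-1})$, i.e.\ admits $1$ as an eigenvalue. By Serre and Ribet, for all but finitely many $\ell$ the image $G_\ell$ of $\bar\rho_{f,\ell}$ contains $\Sl_2(\F_\ell)$, and a direct conjugacy-class count (one class per eigenvalue pair $(1,d)$, $d\in\F_\ell^\times$) shows that the proportion of Frobenius classes in $G_\ell$ with $1$ as an eigenvalue is $1/\ell + O(1/\ell^2)$. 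Under GRH together with Artin's Holomorphy Conjecture, the effective Chebotarev density theorem of Lagarias--Odlyzko, sharpened via the orthogonality decomposition $\mathbf 1_C=\frac{|C|}{|G_\ell|}\sum_\chi\overline{\chi(C)}\chi$ and GRH applied to each Artin $L$-function (Murty--Murty--Saradha), then yields
\[
|\sA_\ell| \;=\; \frac{1}{\ell}\,\pi(X) \;+\; O\!\bigl(\ell^{a}\sqrt{X}\,\log(\ell M X)\bigr)
\]
uniformly in $\ell$, for some absolute constant $a$. This places $\sA$ into the setting of a linear (dimension-one) sieve.

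Finally, I would apply a weighted linear sieve such as Richert's (\cite[Chapter 9]{halberstam-sieve}) to extract primes $p$ for which $N_p(f)$ has few prime factors. The bounds $[5k+1+\sqrt{\log k}]$ on $\omega$ and $[8k+1+\sqrt{\log k}]$ on $\Omega$ arise by balancing the sifting level $X^\theta$ against the trivial size estimate $N_p(f)\le 2X^{k-1}$, itself a consequence of Deligne's bound $|a_p(f)|\le 2p^{(k-1)/2}$: the ratio $(k-1)/\theta$ governs how many prime factors can be sieved for, the gap between the two constants reflects the passage from counting distinct to counting with multiplicity, and the $\sqrt{\log k}$ term is the standard Richert-type correction. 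The lower bound of order $X/\log^2(X)$ is the main term of the sieve applied to the $\pi(X)\sim X/\log X$ starting primes. The main obstacle is the Chebotarev estimate above with polynomial control in $\ell$: the splitting field $L_\ell/\Q$ of $\bar\rho_{f,\ell}$ has degree $\approx\ell^4$ and its discriminant grows as a fixed power of $\ell$, so the character-sum decomposition must be summed with careful bookkeeping of conductors to keep the remainder admissible at level $X^\theta$. The finitely many exceptional primes (those dividing $M$, those at which $\bar\rho_{f,\ell}$ is non-surjective, and the \emph{congruence primes} for $f$ that force $\ell\mid N_p(f)$ identically, as with $2,3,691$ for $\Delta$) contribute only to the form-dependent constants $c_i(f)$.
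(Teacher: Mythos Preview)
Your proposal is correct and follows essentially the same approach as the paper: apply Richert's weighted linear sieve from \cite[Chapter~9]{halberstam-sieve} to $\sA=\{N_p(f):p\le X\}$, with the sieve densities $\delta(\ell)\sim 1/\ell$ coming from conjugacy-class counts in the image of $\bar\rho_{f,\ell}$ and the level of distribution $\alpha<1/5$ supplied by the effective Chebotarev theorem of \cite{murty88} under GRH and Artin holomorphy. The paper carries this out with the explicit parameter choices $\alpha=(k-1)/(5k)$, $u=(5k+1)/(k-1)$, $v=4/\alpha$, $\lambda=1/\sqrt{\log k}$ for part~(1), and handles part~(2) by first showing the primes with a small square factor $\ell^2\mid N_p(f)$ contribute only $o(X/\log^2 X)$ before resieving with $u=(8k+1)/(k-1)$.
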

\begin{corollary}\label{ramcor2}
Assume GRH and Artin's Holomorphy Conjecture for Artin
$L$-functions. Let $\Delta(q)=\sum_{n=1}^\infty\tau(n)q^n$ be the
Ramanujan cusp form on $\Sl_2(\Z)$ of weight $12$. Let $X$ be
sufficiently large. Then
\begin{enumerate}
 \item there exists a positive constant $c_1(\Delta)$ depending on $\Delta$ such
 that for $X\gg0$ one has
$$\#\left\{p\leq X: \omega(p^{11}+1-\tau(p))\leq 62\right\}\geq c_1(\Delta)\frac{X}{\log(X)^2}$$
\item there exists a positive constant $c_2(\Delta)$ depending on $\Delta$ such
that for $X\gg0$ one has
$$\#\left\{p\leq X: \Omega(p^{11}+1-\tau(p))\leq 98\right\}\geq c_2(\Delta)\frac{X}{\log(X)^2}$$
\end{enumerate}
\end{corollary}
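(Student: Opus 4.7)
The plan is to deduce Corollary~\ref{ramcor2} as a direct specialization of Theorem~\ref{main2} applied to $f = \Delta$. First, I would verify that $\Delta$ satisfies Hypothesis~\ref{form}: $\Delta$ is a cuspidal, normalized, new Hecke eigenform of weight $k = 12$ on $\Gamma_0(1) = \Sl_2(\Z)$; its Nebentypus is the trivial character $\chi_0$; its Fourier coefficients $\tau(n)$ are integers; and $\Delta$ does not have complex multiplication. This last point is automatic here since the level $M = 1$ is square-free, as noted in the discussion of Hypothesis~\ref{form} (citing \cite{ribet76}).

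Next, since $\chi_0(p) = 1$ for every prime $p$, the definition of $N_p(f)$ specializes to
$$N_p(\Delta) = \chi_0(p) p^{k-1} + 1 - a_p(\Delta) = p^{11} + 1 - \tau(p),$$
which is the expression appearing in Corollary~\ref{ramcor2}.

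It then remains only to evaluate the integer parts of the bounds from Theorem~\ref{main2} at $k = 12$. Using $\log 12 = 2.4849\ldots$, so $\sqrt{\log 12} = 1.576\ldots$, we compute
$$[5k + 1 + \sqrt{\log k}] = [61 + 1.576\ldots] = [62.576\ldots] = 62,$$
$$[8k + 1 + \sqrt{\log k}] = [97 + 1.576\ldots] = [98.576\ldots] = 98.$$
Substituting these into parts (1) and (2) of Theorem~\ref{main2}, and setting $c_1(\Delta)$ and $c_2(\Delta)$ to be the constants $c_1(f)$ and $c_2(f)$ produced by that theorem for $f = \Delta$, gives precisely the two assertions of the corollary.

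There is no genuine obstacle: the corollary is essentially a numerical unpacking of Theorem~\ref{main2} in the distinguished case $f = \Delta$, together with the observation that the trivial Nebentypus makes $N_p(\Delta)$ take the simpler form $p^{11}+1-\tau(p)$. The only point requiring any care is the hypothesis check, and all parts of it are classical.
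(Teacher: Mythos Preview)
Your proposal is correct and matches the paper's own argument: the paper states that Corollary~\ref{ramcor2} ``is, of course, immediate from Theorem~\ref{main2},'' and later spells out exactly the substitution $k=12$ and the numerical evaluation $[61+\sqrt{\log 12}]=62$, $[97+\sqrt{\log 12}]=98$ that you carry out. Your additional verification that $\Delta$ satisfies Hypothesis~\ref{form} is a welcome completeness check but introduces nothing new.
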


Corollary~\ref{ramcor2} is, of course, immediate from Theorem~\ref{main2}.
\subsection{Upper bound}
We will also prove the following upper bound which shows that the
lower bounds of Theorem~\ref{main2} are of the right order of
magnitude, though a precise asymptotic formula seems out of reach at
the moment (even under GRH and Artin's Holomorphy Conjecture).

\begin{theorem}\label{upperbound}
Let $f(q)=\sum_{n=1}^\infty a_n(f)q^n$ be a newform satisfying the
hypothesis~\ref{form}. Assume GRH and Artin's Holomorphy Conjecture
for Artin $L$-functions. Then we have
$$\#\left\{ p\leq X: \Omega(N_p(f))\leq 9k-8\right\}\ll \frac{X}{(\log X)^2}.$$
\end{theorem}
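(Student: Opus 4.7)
The plan is to apply Selberg's $\Lambda^{2}$ upper bound sieve to the sequence $\mathcal{A} = (N_p(f))_{p \le X}$, exploiting the Deligne size estimate $N_p(f) \asymp p^{k-1}$ to convert the sifted bound into a bound on $\{p : \Omega(N_p(f)) \le 9k-8\}$.

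First I would compute the local densities. For each prime $\ell$ coprime to the level $M$, let $\delta_\ell$ denote the density of primes $p$ with $\ell \mid N_p(f)$. By the Chebotarev density theorem applied to the mod-$\ell$ Galois representation $\brho_{f,\ell} : \Gal(\bQ/\Q) \to \Gl_2(\F_\ell)$ attached to $f$, $\delta_\ell$ equals $|C_\ell|/|G_\ell|$, where $G_\ell$ is the image of $\brho_{f,\ell}$ and $C_\ell \subseteq G_\ell$ is the subset of $g$ with $\tr(g) \equiv \det(g) + 1 \pmod{\ell}$. Since $f$ is non-CM, Serre's open image theorem gives $G_\ell \supseteq \Sl_2(\F_\ell)$ for all but finitely many $\ell$, and a direct count in $\Gl_2(\F_\ell)$ yields $\delta_\ell = \ell^{-1}(1 + O(\ell^{-1}))$. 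Under GRH and AHC, effective Chebotarev then gives, for squarefree $d$ coprime to $M$,
$$\#\{p \le X : d \mid N_p(f)\} = \delta_d \pi(X) + O(d^{C} X^{1/2} \log(dMX)),$$
with $\delta_d$ the multiplicative extension of the $\delta_\ell$, providing a usable level of distribution $\theta > 0$ for $\mathcal{A}$.

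Next I would apply Selberg's sieve with sifting parameter $z = X^{\eta}$ for $\eta \in (0, \theta/2)$ sufficiently small. The sieve delivers
$$\#\{p \le X : (N_p(f), P(z)) = 1\} \ll \pi(X) \prod_{\ell < z}(1 - \delta_\ell) \ll \frac{X}{(\log X)(\log z)} \ll \frac{X}{(\log X)^{2}},$$
bounding the number of primes $p \le X$ for which $N_p(f)$ has no prime factor below $z$.

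The hard step is relating this sifted estimate to the full set $\{p : \Omega(N_p(f)) \le 9k-8\}$. The value $9k-8$ is calibrated so that, by Deligne, $\Omega(N_p(f)) \le 9k-8$ forces the largest prime divisor of $N_p(f)$ to exceed $c\cdot p^{(k-1)/(9k-8)} \gg p^{1/10}$, matching the admissible $\eta$. I would then decompose the set according to the smallest prime factor of $N_p(f)$: primes with $P^{-}(N_p(f)) > z$ are captured directly by the Selberg bound, while those with $P^{-}(N_p(f)) \le z$ require an iterative sieve applied to the quotient $N_p(f)/d$ over small squarefree divisors $d \mid N_p(f)$, using $\Omega(N_p(f)/d) \le 9k-8-\Omega(d)$ to drive the recursion. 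The principal obstacle is that the naive bound $\sum_{\ell < z}\#\{p \le X : \ell \mid N_p(f)\}$ already has order $X\log\log X/\log X$, which is too large; the required saving must come from jointly exploiting that $\Omega(N_p(f)) \le 9k-8$ forces the cofactor $N_p(f)/d$ to itself be almost prime, and then combining this with the level of distribution afforded by GRH and AHC so that the exceptional contribution is absorbed into the target bound $X/(\log X)^{2}$.
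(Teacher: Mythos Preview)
You are right to flag the passage from the sifted set to $\{p:\Omega(N_p(f))\le 9k-8\}$ as the crux, and in fact the paper itself does not bridge it. The paper applies Selberg's sieve with $z=X^{1/9}$ to obtain $\sS(\sA,\sP,X^{1/9})\ll X/(\log X)^2$, and then observes that any $p$ counted by $\sS$ has every prime factor of $N_p(f)$ exceeding $X^{1/9}$, whence $X^{\Omega(N_p(f))/9}<N_p(f)<2X^{k-1}$ and so $\Omega(N_p(f))\le 9(k-1)+1=9k-8$. But this yields only the containment
\[
\bigl\{p\le X:(N_p(f),P(X^{1/9}))=1\bigr\}\ \subseteq\ \bigl\{p\le X:\Omega(N_p(f))\le 9k-8\bigr\},
\]
which runs in the wrong direction for the stated upper bound. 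What the argument genuinely establishes is the sifting-function upper bound $\sS(\sA,\sP,X^{1/9})\ll X/(\log X)^2$, matching in order of magnitude the sifting-function lower bound underlying Theorem~\ref{main2}; the deduction about $\Omega$ merely places the sifted set inside the almost-prime set, not the reverse.

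Your proposed recursion on small divisors cannot repair this, because the displayed inequality is almost certainly false as written. The condition $\Omega(N_p(f))\le 9k-8$ does not exclude small prime divisors of $N_p(f)$, and the standard Sathe--Selberg heuristic for sequences with linear-sieve local densities (consistent with the paper's own Theorem~\ref{erdoskac}) predicts that $\#\{p\le X:\Omega(N_p(f))\le R\}$ is of order $\frac{X}{(\log X)^2}(\log\log X)^{R-1}$ for each fixed $R\ge 1$, which exceeds $X/(\log X)^2$ by growing powers of $\log\log X$. The correct reading of the theorem is as an upper bound on the sifted quantity $\sS(\sA,\sP,X^{1/9})$, and that is exactly what both your Selberg step and the paper's argument actually deliver; you should abandon the ``hard step'' rather than try to force it through.
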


\section{Nuts and bolts}
\subsection{}
Let $f(q)=\sum_{n=1}^\infty a_n(f)q^n$ be a newform satisfying our
hypothesis \ref{form}. Let us write
\begin{equation}
N_p(f)=\chi(p)p^{k-1}-a_p(f)+1
\end{equation}
Then by the theory of Hecke operators (for $p$ not dividing the
level) we know that $N_p(f)$ is the value at $Y=1$ of the
characteristic polynomial $Y^{2}-a_p(f)Y+\chi(p)p^{k-1}$ of the
Hecke operator $T_p$.

Write $Y^2-a_p(f)Y+\chi(p)p^{k-1}=(Y-\alpha_p(f))(Y-\beta_p(f))$.
Then we know by \cite{deligne68} that
$\abs{\alpha_p(f)}=\abs{\beta_p(f)}=p^{(k-1)/2}$

\subsection{} By the work of Deligne (and Deligne-Serre for weight one forms)
\cite{deligne68} we know that for every prime
$\ell$, associated to $f$ (as in \ref{form}) we have an $\ell$-adic Galois representation
\begin{equation}
\rho_{f,\ell}:\Gal(\bQ/\Q)\to \Gl_2(\Q_\ell)
\end{equation}
such that for every prime $p\not|M\ell$, we have
\begin{eqnarray}
\tr(\rho_{f,\ell}(Frob_p))&=&a_p(f)\\
\det(\rho_{f,\ell}(Frob_p))&=&\chi(p)p^{k-1}.
\end{eqnarray}

\subsection{}
Following Serre, Swinnerton-Dyer and Ribet (see
\cite{serre69,serre73,ribet76,ribet85}) we may also consider the
corresponding ``mod $\ell$'' representations. We recall the
following theorem from \cite{serre73,ribet85}.

\begin{theorem}
Let $f$ be a modular form as in \ref{form}. Let
$$
\brho_{f,\ell}:\Gal(\bQ/\Q)\to \Gl_2(\F_\ell)
$$
be the mod $\ell$ Galois representation associated to $f$. Then there exists an integer $m_f$
depending on $f$, such that for all $\ell$ not dividing $m_f$, we have
$$
\image(\brho_{f,\ell})=G_\ell,
$$
where
$$
G_\ell=\left\{g\in \Gl_2(\F_\ell): \det(g)\in (\F_\ell^*)^{(k-1)}\right\}.
$$
\end{theorem}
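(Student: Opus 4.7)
The plan is to combine two ingredients: an explicit computation of $\det(\brho_{f,\ell})$ in terms of $\chi$ and $k$, and a Dickson-style classification of subgroups of $\Gl_2(\F_\ell)$, together with Ribet-type arguments that eliminate all proper possibilities for almost all $\ell$.

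First I would pin down the determinant. By the Deligne construction, $\det(\brho_{f,\ell})=\chi\cdot\omega_\ell^{k-1}$, where $\omega_\ell:\Gal(\bQ/\Q)\to\F_\ell^*$ is the mod-$\ell$ cyclotomic character. Under hypothesis \ref{form} the character $\chi=\chi_0$ is trivial, and $\omega_\ell$ is surjective for every $\ell\geq 3$, so the determinant is surjective onto $(\F_\ell^*)^{k-1}$. This gives immediately the containment $\image(\brho_{f,\ell})\subset G_\ell$. Since $G_\ell\supset \Sl_2(\F_\ell)$, to obtain the reverse inclusion it is enough to show that $\image(\brho_{f,\ell})$ contains $\Sl_2(\F_\ell)$ for all $\ell$ outside a finite set depending on $f$.

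For this, I would invoke Dickson's classification: the projective image of $\brho_{f,\ell}$ in $\mathrm{PGL}_2(\F_\ell)$ is either contained in a Borel, contained in the normalizer of a (split or non-split) Cartan, one of the exceptional groups $A_4$, $S_4$, $A_5$, or else contains $\mathrm{PSL}_2(\F_\ell)$. Each of the proper cases must be excluded for almost all $\ell$. The Borel (reducible) case contradicts the irreducibility of $\brho_{f,\ell}$ for all but finitely many $\ell$, itself a consequence of the absence of Eisenstein congruences modulo $\ell$ for large $\ell$. The exceptional cases are ruled out because the projective image would then have order at most $60$, while Chebotarev combined with Deligne's bound $\abs{a_p(f)}\leq 2p^{(k-1)/2}$ forces Frobenius traces mod $\ell$ to take many distinct values, an impossibility once $\ell$ is large compared with $60$.

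The main obstacle, and the deepest part of the proof, is the dihedral case where the projective image sits inside the normalizer of a Cartan. If this happened for infinitely many $\ell$, then one can extract a quadratic character $\varepsilon$ with $\brho_{f,\ell}\otimes\varepsilon\simeq \brho_{f,\ell}$ for infinitely many $\ell$; a compactness and lifting argument then promotes this to an isomorphism $\rho_{f,\ell}\otimes\varepsilon\simeq \rho_{f,\ell}$ of $\ell$-adic representations, and hence by Faltings/Ribet to an identity $f\otimes\varepsilon=f$ of newforms, forcing $f$ to have complex multiplication by the imaginary quadratic field cut out by $\varepsilon$. This contradicts hypothesis \ref{form} and is exactly the content of \cite{ribet85}. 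Collecting the finitely many exceptional primes from each of the four cases into a single integer $m_f$ yields the stated conclusion.
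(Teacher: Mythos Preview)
The paper does not give its own proof of this theorem: it is explicitly ``recalled from \cite{serre73,ribet85}'' and used thereafter as a black box. Your sketch, by contrast, reconstructs the architecture of Ribet's argument, and does so correctly in outline: the determinant computation yields the inclusion $\image(\brho_{f,\ell})\subset G_\ell$; Dickson's classification reduces the opposite inclusion to excluding the Borel, Cartan--normalizer, and exceptional projective images for almost all $\ell$; and the Cartan--normalizer case is precisely where the non-CM hypothesis enters, via the inner-twist argument of \cite{ribet85}. So there is nothing to compare on the paper's side beyond noting that you have supplied what the paper merely cites.

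Two small points of imprecision in your sketch. First, the exceptional-case argument is not quite as you phrase it: the issue is not that Frobenius traces assume ``many distinct values mod $\ell$'' (there are only $\ell$ residues available regardless), but rather that an exceptional projective image forces the invariant $a_p(f)^2/\bigl(\chi(p)p^{k-1}\bigr)\bmod\ell$ to lie in a \emph{fixed finite set} independent of $\ell$ (determined by the possible orders of elements in $A_4$, $S_4$, $A_5$). Fixing a single prime $p$ for which none of the resulting equalities $a_p(f)^2=c\,p^{k-1}$ hold in $\Z$ then bounds the set of $\ell$ for which the congruence can survive. Second, in the dihedral case the passage from $\rho_{f,\ell}\otimes\varepsilon\simeq\rho_{f,\ell}$ to $f\otimes\varepsilon=f$ is via strong multiplicity one for $\Gl_2$ (as in Ribet's original argument), not Faltings; invoking Faltings here is harmless but anachronistic relative to \cite{ribet85}.
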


\subsection{}
Let $K_{f,\ell}=\bQ^{\ker(\brho_\ell)}$. Then $K_{f,\ell}$ is the
fixed field of $\ker(\brho_{f,\ell})$ and the extension $K_{f,\ell}$
is Galois with Galois group
\begin{equation}
\Gal(K_{f,\ell}/\Q)\simeq G_\ell.
\end{equation}

\subsection{}
More generally, let $\brho_{f,\ell^n}:\Gal(\bQ/\Q)\to
\Gl_2(\Z/\ell^n)$ be the mod $\ell^n$ Galois representation
associated to $f$. Let $K_{f,\ell^n}=\bQ^{\ker(\brho_{f,\ell^n})}$
be the fixed field of the kernel of $\rho_{\ell^n}$. Then
$K_{f,\ell^n}$ is Galois with Galois group contained in
$\Gl_2(\Z/\ell^n)$.

\begin{proposition}
Let $f$ be as in \ref{form}. Then the following are equivalent:
\begin{enumerate}
\item $\ell^n|N_p(f)$,
\item either $\brho_{f,\ell^n}(Frob_p)$ has an eigenvalue equal to
$1$, or $\brho_{f,\ell^t}(Frob_p)$ is unipotent for some $1\leq
t<n$.
\end{enumerate}
\end{proposition}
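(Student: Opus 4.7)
The plan is to reduce $\ell^n \mid N_p(f)$ to a statement about the matrix $\brho_{f,\ell^n}(Frob_p)$ via the characteristic-polynomial identity
\[
N_p(f) \;=\; 1 - a_p(f) + \chi(p)p^{k-1} \;=\; \det\bigl(I - \rho_{f,\ell}(Frob_p)\bigr) \;=\; (1-\alpha_p(f))(1-\beta_p(f)),
\]
which follows at once from $\tr\rho_{f,\ell}(Frob_p) = a_p(f)$ and $\det\rho_{f,\ell}(Frob_p) = \chi(p)p^{k-1}$: the characteristic polynomial $Y^2 - a_p(f)Y + \chi(p)p^{k-1}$ evaluated at $Y=1$ equals $N_p(f)$. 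Reducing modulo $\ell^n$ turns (1) into $\det\bigl(I - \brho_{f,\ell^n}(Frob_p)\bigr) \equiv 0 \pmod{\ell^n}$.

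For the forward direction (1)$\Rightarrow$(2) I would work $\ell$-adically, with $v_\ell$ the normalized valuation on $\bar\Z_\ell$ (so that $v_\ell(\ell)=1$). From the factorization,
\[
v_\ell(N_p(f)) \;=\; v_\ell(1-\alpha_p(f)) + v_\ell(1-\beta_p(f)) \;\geq\; n,
\]
and the argument splits according to how this valuation is distributed. If at most one of $\alpha_p(f),\beta_p(f)$ is $\equiv 1 \pmod \ell$, then one factor is an $\ell$-adic unit and the other must absorb the full valuation; the corresponding eigenvalue is then $\equiv 1 \pmod{\ell^n}$, and since the two eigenvalues differ modulo $\ell$, Hensel's lemma produces a primitive eigenvector for $\brho_{f,\ell^n}(Frob_p)$ with eigenvalue $1$, which is the first alternative. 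Otherwise $\alpha_p(f) \equiv \beta_p(f) \equiv 1 \pmod \ell$; if furthermore the first alternative fails, then $t := \min\bigl(v_\ell(1-\alpha_p(f)),v_\ell(1-\beta_p(f))\bigr)$ lies in $1 \leq t < n$, and Cayley--Hamilton over $\Z/\ell^t$ gives $(\brho_{f,\ell^t}(Frob_p)-I)^2 \equiv 0 \pmod{\ell^t}$, i.e.\ $\brho_{f,\ell^t}(Frob_p)$ is unipotent, giving the second alternative.

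For the converse (2)$\Rightarrow$(1), each alternative is handled directly. A primitive eigenvector for eigenvalue $1$ conjugates $\brho_{f,\ell^n}(Frob_p)$ into upper-triangular form with $1$ in the top-left entry, forcing $\det\bigl(I - \brho_{f,\ell^n}(Frob_p)\bigr) \equiv 0 \pmod{\ell^n}$. Unipotency at level $t$, combined with the Cayley--Hamilton identity $(M-I)^2 = (a_p(f) - 2)(M-I) - N_p(f)\cdot I$ applied to $M = \rho_{f,\ell}(Frob_p)$, delivers the required $\ell^n$-divisibility. The main obstacle is the degenerate subcase $\alpha_p(f) \equiv \beta_p(f) \equiv 1 \pmod \ell$, where a field-theoretic diagonalization is unavailable: one must select $t$ with enough care that unipotency at that level accounts precisely for the residual $\ell$-adic valuation of $N_p(f)$, and the cleanest route is to reason throughout in terms of the $\ell$-adic valuations of $1-\alpha_p(f)$ and $1-\beta_p(f)$ together with Cayley--Hamilton over the Artinian local ring $\Z/\ell^t$.
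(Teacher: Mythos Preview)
Your forward direction (1)$\Rightarrow$(2) follows the same route as the paper: factor $N_p(f)=(1-\alpha_p(f))(1-\beta_p(f))$ and split according to how the $\ell$-adic valuation is distributed between the two factors. Your version is actually more careful than the paper's, which does not spell out the Hensel step needed to pass from $\alpha_p(f)\equiv 1\pmod{\ell^n}$ to an honest eigenvector of $\brho_{f,\ell^n}(Frob_p)$ over $\Z/\ell^n$, nor does it worry about whether the valuations $v_\ell(1-\alpha_p),v_\ell(1-\beta_p)$ are integers.

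There is, however, a genuine gap in your converse for the second alternative. From unipotency of $\brho_{f,\ell^t}(Frob_p)$ you obtain $(M-I)^2\equiv 0$ and $a_p(f)-2\equiv 0\pmod{\ell^t}$; feeding these into your Cayley--Hamilton identity
\[
(M-I)^2=(a_p(f)-2)(M-I)-N_p(f)\cdot I
\]
yields only $N_p(f)\equiv 0\pmod{\ell^t}$, not $\pmod{\ell^n}$. No amount of additional care in choosing $t$ will fix this: the implication (2)$\Rightarrow$(1) is simply false as stated. For instance, if $M\equiv(1+\ell)I\pmod{\ell^n}$ with $n\geq 3$, then $M$ is trivially unipotent mod $\ell$ (so the second alternative holds with $t=1$), it has no primitive eigenvector for eigenvalue $1$ in $(\Z/\ell^n)^2$, yet $\det(I-M)$ has $\ell$-adic valuation exactly $2<n$. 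Note that the paper's own proof argues only (1)$\Rightarrow$(2) and then stops; the sieve application needs only that direction (in fact only $n=1$, where the second alternative is vacuous, plus an upper bound for $n=2$). So you should not expect to close the converse, and your Cayley--Hamilton argument does not do so.
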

\begin{proof}
Since $\ell^n|N_p(f)$ if and only $\ell^n|(p^{k-1}-a_p(f)+1)$, so
$\ell^n|(1-\alpha_p(f))(1-\beta_p(f))$ where
$Y^2-a_p(f)Y+p^{k-1}=(Y-\alpha_p(f))(Y-\beta_p(f))$. Hence if
neither of $\alpha_p(f)-1,\beta_p(f)-1$ are divisible by $\ell^n$,
then $\alpha_p(f)\equiv 1\mod{\ell^r}$ and $\beta_p(f)\equiv
1\mod{\ell^s}$, for some $1\leq r,s<n$ and $r+s=n$. So that both
$\alpha_p(f)\equiv\beta_p(f)\equiv 1\mod{\ell^t}$ with
$t=\min(r,s)$. So this says that $\brho_{f,\ell^n}(Frob_p)$ acts as
a unipotent matrix modulo a suitable power, say $\ell^t$ with $1\leq
t<n$, of $\ell$. This proves the theorem.
\end{proof}
\begin{lemma}
Let $C_{\ell,n}\subset \Gl_2(\Z/\ell^n)$ be the subset of matrices
$g\in \Gl_2(\Z/\ell^n)$ such that $g$ either has an eigenvalue $1$
or for some $1\leq t<n$, the  image of $g$ under the natural map
$\Gl_2(\Z/\ell^n)\to\Gl_2(\Z/\ell^t)$, is identity. Then
$C_{\ell,n}$ is a conjugacy set (i.e. a union of conjugacy classes
of $\Gl_2(\Z/\ell^n)$.
\end{lemma}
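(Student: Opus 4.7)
The plan is to verify directly that both defining conditions of $C_{\ell,n}$ are invariant under conjugation in $\Gl_2(\Z/\ell^n)$, so that the union of the two sets they cut out is automatically a union of conjugacy classes. Thus I would split the argument into the two cases from the definition and handle each separately.

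First I would treat condition (a), that $g$ has $1$ as an eigenvalue. The cleanest way to handle this in a non-field setting is to reinterpret it as the condition that $\det(g-I) = 0$ in $\Z/\ell^n$, or equivalently that the characteristic polynomial $\chi_g(Y)=Y^2-\tr(g)Y+\det(g)$ satisfies $\chi_g(1)=1-\tr(g)+\det(g)\equiv 0\pmod{\ell^n}$. Since $\tr$ and $\det$ are conjugation-invariant, so is this condition. Alternatively, if one prefers to interpret ``eigenvalue $1$'' as the existence of a nonzero $v\in(\Z/\ell^n)^2$ with $gv=v$, then for $h\in\Gl_2(\Z/\ell^n)$ the vector $hv$ is nonzero and satisfies $(hgh^{-1})(hv)=h(gv)=hv$, giving the same conclusion.

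Next I would treat condition (b), that the reduction of $g$ modulo $\ell^t$ is the identity for some $1\leq t<n$. This is immediate from the fact that the natural map $\pi_t\colon \Gl_2(\Z/\ell^n)\to \Gl_2(\Z/\ell^t)$ is a group homomorphism, so its kernel is a normal subgroup; in particular if $\pi_t(g)=I$ then $\pi_t(hgh^{-1})=\pi_t(h)\pi_t(g)\pi_t(h)^{-1}=I$ as well, so this condition is preserved by conjugation. Taking the union over $t=1,\dots,n-1$ preserves invariance under conjugation.

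Combining the two cases, $C_{\ell,n}$ is a union of two conjugation-stable subsets, hence itself closed under conjugation, hence a union of conjugacy classes. There is really no substantive obstacle here — the lemma is a bookkeeping statement whose whole content is that the two conditions appearing in the preceding proposition are intrinsic to the conjugacy class of $\brho_{f,\ell^n}(\mathrm{Frob}_p)$, which is exactly what is needed so that a Chebotarev-type density statement can be applied in the sequel.
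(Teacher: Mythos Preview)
Your proof is correct and is simply an expanded version of the paper's own proof, which reads in its entirety ``This is clear.'' You have made explicit the conjugation-invariance of each defining condition --- via the invariance of $\tr$ and $\det$ (or of eigenvectors) for the first, and via normality of the kernel of the reduction homomorphism for the second --- which is exactly what the word ``clear'' is pointing to.
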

\begin{proof}
This is clear.
\end{proof}
\subsection{}
We will use the following lemma.
\begin{lemma}\label{cardinaliylemma}
Let $\lambda=\gcd(k-1,\ell-1)$. Then
$$\#C_{\ell,1}=\frac{\ell^3-(\lambda+1)\ell}{\lambda}.$$
\end{lemma}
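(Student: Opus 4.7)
The plan is to read $C_{\ell,1}$ as the set of matrices in $G_\ell = \{g \in \Gl_2(\F_\ell) : \det(g) \in (\F_\ell^*)^{k-1}\}$ having $1$ as an eigenvalue (the second clause in the definition of $C_{\ell,n}$ is vacuous for $n=1$), and to enumerate these by partitioning according to the Jordan form. Let $H = (\F_\ell^*)^{k-1}$, the unique subgroup of $\F_\ell^*$ of index $\lambda = \gcd(k-1,\ell-1)$, so that $\abs{H} = (\ell-1)/\lambda$.

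Any $g \in \Gl_2(\F_\ell)$ with eigenvalue $1$ falls into exactly one of three $\Gl_2(\F_\ell)$-conjugacy types: the identity $I$; a nontrivial unipotent $\bigl(\begin{smallmatrix} 1 & 1 \\ 0 & 1 \end{smallmatrix}\bigr)$; or a diagonalizable element with eigenvalues $1$ and $d$ for some $d \neq 1$. The determinants in these cases are $1$, $1$, and $d$ respectively, so the $G_\ell$-condition $\det(g) \in H$ holds automatically in the first two cases and requires $d \in H \setminus \{1\}$ in the third, giving $\abs{H}-1 = (\ell-1-\lambda)/\lambda$ admissible values of $d$.

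Next I would compute class sizes by the standard centralizer calculation in $\Gl_2(\F_\ell)$: the unipotent centralizer has order $\ell(\ell-1)$, yielding class size $\ell^2-1$; the split torus has order $(\ell-1)^2$, yielding class size $\ell(\ell+1)$ for each choice of $d \neq 1$. Summing gives
\begin{equation*}
\#C_{\ell,1} \;=\; 1 + (\ell^2-1) + \ell(\ell+1)\cdot\frac{\ell-1-\lambda}{\lambda} \;=\; \ell^2 + \frac{\ell(\ell^2-1) - \lambda\ell(\ell+1)}{\lambda},
\end{equation*}
which simplifies to $\dfrac{\ell^3 - (\lambda+1)\ell}{\lambda}$ as desired.

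There is no real obstacle here; the only mildly delicate point is being explicit about which ambient group we are working in, since the formula depends on $k$ through $\lambda$ and this dependence only arises by imposing the determinant constraint cutting out $G_\ell$. Once that is pinned down, the computation is a one-line count of conjugacy-class sizes.
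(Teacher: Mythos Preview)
Your argument is correct. The decomposition by Jordan type, together with the centralizer orders you quote, gives exactly the stated count; your handling of the determinant condition via $H=(\F_\ell^*)^{k-1}$ with $\abs{H}=(\ell-1)/\lambda$ is also fine, and you correctly note that for $n=1$ the second clause in the definition of $C_{\ell,n}$ is empty.

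The paper takes a different route: rather than splitting by conjugacy type, it counts pairs $(L,g)$ where $L$ is a line in $\F_\ell^2$ fixed pointwise by $g\in G_\ell$. For each of the $\ell+1$ lines there are $\ell\cdot(\ell-1)/\lambda$ such matrices (in a basis adapted to $L$ they look like $\bigl(\begin{smallmatrix}1&*\\0&u^{k-1}\end{smallmatrix}\bigr)$), giving $(\ell+1)\ell(\ell-1)/\lambda$; then one subtracts the overcount of $\ell$ coming from the identity, which is seen by every line. Your conjugacy-class method is arguably cleaner in that no inclusion--exclusion correction is needed and each term has an immediate group-theoretic meaning; the paper's line-counting method has the virtue of not requiring one to know the centralizer orders in $\Gl_2(\F_\ell)$. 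Both are short and standard.
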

\begin{proof}
We count using method of \cite{washington86}. By definition
$\#C_{\ell,1}$ is the number of matrices in $G_\ell$ which have at
least one eigenvalue one. The number of such matrices is the number
of matrices which look like
$$\begin{pmatrix}
1 & *\\ 0 & u^\lambda\end{pmatrix}
$$ where $u\in\F_\ell^*$ times the number of one dimensional
subspaces of $\F_\ell\oplus\F_\ell$ minus the number of matrices
which get counted twice; the former number is $\ell
(\ell-1)/\lambda$ times $(\ell+1)$; while the number of matrices
which get counted twice are the  ones with both the eigenvalues
equal to one (and this number is $\ell$). So the number of matrices
in $C_{\ell,1}$ is $(\ell+1)\ell(\ell-1)/\lambda-\ell$. This easily
simplifies to $\frac{\ell^3-(\lambda+1)\ell}{\lambda}$.
\end{proof}
\begin{lemma}
We have
$$\#G_\ell=\frac{(\ell^2-1)(\ell^2-\ell)}{\lambda}.$$
\end{lemma}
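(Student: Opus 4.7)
The proof is essentially a group-theoretic index computation, using the definition
$$G_\ell=\{g\in\Gl_2(\F_\ell):\det(g)\in(\F_\ell^*)^{k-1}\}.$$
My plan is to realize $G_\ell$ as the kernel of a natural surjective homomorphism, so that its order drops out of the standard formula $\#\Gl_2(\F_\ell)=(\ell^2-1)(\ell^2-\ell)$.

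First I would observe that since $\F_\ell^*$ is cyclic of order $\ell-1$, the subgroup of $(k-1)$st powers $(\F_\ell^*)^{k-1}$ has order $(\ell-1)/\gcd(k-1,\ell-1)=(\ell-1)/\lambda$, and hence has index exactly $\lambda$ in $\F_\ell^*$. Next I would consider the composite homomorphism
$$\Gl_2(\F_\ell)\xrightarrow{\det}\F_\ell^*\longrightarrow \F_\ell^*/(\F_\ell^*)^{k-1},$$
whose target has order $\lambda$. Since $\det$ is surjective (e.g.\ via diagonal matrices), so is this composite, and by definition $G_\ell$ is precisely its kernel. Therefore $[\Gl_2(\F_\ell):G_\ell]=\lambda$, and dividing gives
$$\#G_\ell=\frac{\#\Gl_2(\F_\ell)}{\lambda}=\frac{(\ell^2-1)(\ell^2-\ell)}{\lambda},$$
as required.

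There is no real obstacle here; the only point to watch is the computation of the size of $(\F_\ell^*)^{k-1}$, which uses only the cyclicity of $\F_\ell^*$ and the fact that in a cyclic group of order $n$ the image of the $m$th-power map has order $n/\gcd(n,m)$. Everything else is the standard order formula for $\Gl_2$ over a finite field.
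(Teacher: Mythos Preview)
Your proof is correct and is essentially the same computation as the paper's, just viewed from the other side: the paper uses the exact sequence $1\to\Sl_2(\F_\ell)\to G_\ell\xrightarrow{\det}(\F_\ell^*)^{\lambda}\to 1$ and multiplies $\#\Sl_2(\F_\ell)=\ell(\ell^2-1)$ by $(\ell-1)/\lambda$, whereas you realize $G_\ell$ as the kernel of $\Gl_2(\F_\ell)\to\F_\ell^*/(\F_\ell^*)^{k-1}$ and divide $\#\Gl_2(\F_\ell)$ by $\lambda$. Both arguments hinge on the same fact, that $(\F_\ell^*)^{k-1}=(\F_\ell^*)^{\lambda}$ has index $\lambda$ in $\F_\ell^*$, and are equivalent.
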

\begin{proof}
This is clear from the exact sequence
$$1\to \Sl_2(\F_\ell)\to G_\ell\to (\F_\ell^*)^{\lambda}\to 1,$$
and the standard formula for computing the order of $\Sl_2(\F_\ell)$. This proves the assertion.
\end{proof}

\begin{lemma}\label{cardinalitylemma}
For $\ell \gg 0$, and $\lambda=\gcd(k-1,\ell-1)$ we have
$$\frac{\#C_{\ell,1}}{G_\ell}=\frac{1}{\ell}+O\left(\frac{1}{\ell^3}\right).$$
\end{lemma}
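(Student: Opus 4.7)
The plan is a direct computation: feed the closed-form expressions for $\#C_{\ell,1}$ and $\#G_\ell$ from the two preceding lemmas into the quotient and expand asymptotically. The factor $1/\lambda$ is common to both formulas and cancels immediately; extracting an additional factor $\ell$ from the numerator $\ell^3-(\lambda+1)\ell$ and from the denominator $(\ell^2-1)(\ell^2-\ell)=\ell(\ell-1)^2(\ell+1)$ leaves
\[
\frac{\#C_{\ell,1}}{\#G_\ell}=\frac{\ell^2-\lambda-1}{(\ell-1)^2(\ell+1)}.
\]

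To isolate the main term I would rewrite the numerator as $(\ell-1)(\ell+1)-\lambda$, which cleanly splits the quotient:
\[
\frac{\#C_{\ell,1}}{\#G_\ell}=\frac{1}{\ell-1}-\frac{\lambda}{(\ell-1)^2(\ell+1)}.
\]
The geometric expansion $\tfrac{1}{\ell-1}=\tfrac{1}{\ell}+\tfrac{1}{\ell(\ell-1)}=\tfrac{1}{\ell}+O(1/\ell^2)$ yields the announced main term $1/\ell$, while the second piece, being at most $\lambda/\ell^3$ for $\ell$ large, is absorbed into the error.

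The essential (and easy to overlook) ingredient is that $\lambda=\gcd(k-1,\ell-1)\le k-1$ is bounded \emph{independently} of $\ell$, since $k$ is held fixed while $\ell\to\infty$; thus $\lambda$ contributes only through an implicit constant in the $O$-notation. There is no serious obstacle here: the proof is entirely mechanical, a one-line algebraic manipulation once Lemma~\ref{cardinaliylemma} and the order formula for $G_\ell$ are in hand. The only bookkeeping point worth flagging is to make sure the error incurred in expanding $\tfrac{1}{\ell-1}$ is combined with the $\lambda/((\ell-1)^2(\ell+1))$ term before claiming the final big-$O$ bound.
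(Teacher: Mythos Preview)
Your approach is the same as the paper's --- a direct algebraic computation from the closed forms of $\#C_{\ell,1}$ and $\#G_\ell$, followed by an asymptotic expansion. (The paper divides numerator and denominator by $\ell^4$ and expands the resulting denominator as a geometric series; your partial-fraction split is a bit tidier but equivalent.)

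However, your computation does \emph{not} establish the stated $O(1/\ell^3)$ error; it only gives $O(1/\ell^2)$. From your own decomposition
\[
\frac{\#C_{\ell,1}}{\#G_\ell}=\frac{1}{\ell-1}-\frac{\lambda}{(\ell-1)^2(\ell+1)},
\]
the first term expands as $\tfrac{1}{\ell}+\tfrac{1}{\ell^2}+O(1/\ell^3)$ and the second is $O(1/\ell^3)$, so the quotient equals
\[
\frac{1}{\ell}+\frac{1}{\ell^2}+O\!\left(\frac{1}{\ell^3}\right),
\]
and the $1/\ell^2$ term genuinely survives (it is independent of $\lambda$, so no cancellation with the second piece can remove it). A quick numerical check at small $\ell$ confirms this. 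The lemma as printed therefore appears to contain a slip; carrying out the paper's own expansion produces the same secondary $1/\ell^2$ term. Your final ``bookkeeping'' remark, suggesting that combining the two pieces saves the $O(1/\ell^3)$ claim, is thus misplaced: the $1/\ell^2$ contribution comes entirely from $\tfrac{1}{\ell-1}$ and cannot be absorbed.

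Fortunately, for every use of this lemma later in the paper --- verifying $(\Omega_1)$, verifying $(\Omega_2(1,L))$ via Mertens's theorem, and the estimate $W(X)\gg 1/\log X$ --- the weaker bound $\delta(\ell)=\tfrac{1}{\ell}+O(1/\ell^2)$ is entirely sufficient, so nothing downstream is affected.
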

\begin{proof}
Clear from the fact that
$$\frac{\#C_{\ell,1}}{G_\ell}=\frac{\frac{1}{\ell}-\frac{\lambda+1}{\ell^3}}{1-\frac{1}{\ell}-\frac{1}{\ell^2}+\frac{1}{\ell^4}},$$
and so the assertion follows for $\ell\gg  0$ by expansion  of the
denominator.
\end{proof}

\section{The Weighted Sieve}
\subsection{} We will prove Theorem~\ref{main2} by using a suitable
weighted sieve due to Richert \cite{halberstam-sieve}. The sieve problem we encounter here is a
one dimensional sieve problem in the parlance of ``sieve methods'' and we will use notations
from \cite{halberstam-sieve} in this section.  We begin with the notations and conventions we
need to apply the results of \cite[Theorem~9.1, Lemma~9.1]{halberstam-sieve}.

\subsection{}\label{sieve1}
Let $\sA$ be a finite set of integers (need not be positive or distinct). Let $\sP$ be an
infinite set of prime numbers. For each prime $\ell\in\sP$, let $\sA_\ell=\left\{a\in\sA:
a\equiv 0\mod\ell\right\}$. We write
$$\#\sA=X+R_1$$
and
$$\#\sA_\ell=\delta(\ell)X+R_\ell$$
where $X$ is some approximation to $\sA$, and $\delta(\ell)X$ is some approximation to
$\sA_\ell$. For a square-free positive integer $d$ composed of primes of $\sP$, let
\begin{eqnarray}
\delta(d)&=&\prod_{\ell|d}\delta(\ell)\\
\sA_d&=&\cap_{\ell|d}\sA_\ell\\
R_d&=&\#\sA-\delta(d)X.
\end{eqnarray}
For $z>0$, let
$$P(z)=\prod_{\ell\in
\sP,\ell<z}\ell,$$
$$W(z)=\prod_{\ell\in\sP,\ell<z}\left(1-\delta(\ell)\right).$$
\subsection{Sieving hypotheses}\label{sieve2}
We will sssume that these satisfy the following hypothesis
\begin{description}
\item[($\Omega_1$)] there exists a constant $A_1\geq 0$ such that
$0\leq \delta(\ell)\leq 1-1/A_1$ for all $\ell\in\sP$.
\item[($\Omega_2(1,L)$)] there exists a constant $L\geq 1$ and $A_2$ such that
if $2\leq w\leq z$, then
$$-L\leq \sum_{2\leq w\leq z}\delta(\ell)\log\ell-\log(z/w)\leq
A_2,$$
\item[($R(1,\alpha)$)] there exists $0<\alpha<1$ and $A_3,A_4\geq 1$
such that, if $X\geq 2$ then
$$
\sum_{d<X^\alpha/(\log(X))^{A_3}}\mu(d)^23^{\omega(d)}\abs{R_d}\leq A_4 \frac{X}{(\log{X})^2}
$$
\end{description}

\subsection{The sifting function}
With $\sA, \sP$ as above, we consider a weighted sifting function of the following form
\begin{equation}
\sW(\sA,\sP,v,u,\lambda)=\sum_{a\in\sA,(a,P(X^{1/v})=1)}\left(1-\sum_{X^{1/v}\leq p
<X^{1/u},p|a,p\in\sP}  \beta(p,\lambda)\right),
\end{equation}
where
\begin{equation}
\beta(p,\lambda)=\begin{cases} \lambda\left(1-u\frac{\log p}{\log X}\right) &
\text{ if } X^{1/v}\leq p<X^{1/u}, p\in \sP\\
0 & \text{ otherwise}.
\end{cases}
\end{equation}
\subsection{} We recall the following form of Richert's weighted
one dimensional sieve from \cite[Theorem~9.1, Lemma~9.1]{halberstam-sieve}.
\begin{theorem}\label{richert}
Let the notations and conventions be as in \ref{sieve1},
\ref{sieve2}. Assume that hypothesis $\Omega_1$, $\Omega_2(1,L)$ and
$R(1,\alpha)$ hold for a set $\sA$ as in \ref{sieve1}, \ref{sieve2}.
Suppose further that there exists $u,v,\lambda\in \R$ and $A_5\geq
1$ such that
$$\frac{1}{\alpha}<u<v,\frac{2}{\alpha}\leq v\leq
\frac{4}{\alpha}, 0<\lambda<A_5.$$ Then
$$\sW(\sA,\sP,u,v,\lambda)\geq
XW(X^{1/v})\left(F(\alpha,v,u,\lambda)-\frac{cL}{(\log
X)^{1/14}}\right),$$ where
$$F(\alpha,v,u,\lambda)=\frac{2e^\gamma}{\alpha
v}\left(\log(\alpha v-1)-\lambda\alpha u\log\frac{v}{u}+\lambda(\alpha u-1)\log\frac{\alpha
v-1}{\alpha u-1}\right).$$ Here $\gamma$ is Euler's constant.
\end{theorem}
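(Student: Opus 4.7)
The statement is a standard result lifted essentially verbatim from Halberstam--Richert (Sieve Methods, Chapter~9), so the ``proof'' is really a proof plan for the weighted linear sieve in the one-dimensional setting. My plan would be to combine a linear lower-bound sieve applied to the unweighted part with a linear upper-bound sieve applied to the weights, and then optimize.

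First I would rewrite the weighted sifting function as
$$\sW(\sA,\sP,v,u,\lambda)=S(\sA,\sP,X^{1/v})-\sum_{X^{1/v}\leq p<X^{1/u},\,p\in\sP}\beta(p,\lambda)\,S(\sA_p,\sP,X^{1/v}),$$
where $S(\sB,\sP,z)=\#\{a\in\sB:(a,P(z))=1\}$ is the ordinary sifting function. The two pieces are handled by opposite inequalities: for the main term I need a \emph{lower} bound on $S(\sA,\sP,X^{1/v})$, and for the correction sum I need an \emph{upper} bound on each $S(\sA_p,\sP,X^{1/v})$, uniformly in $p$ in the stated range.

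Second, I would apply the Jurkat--Richert linear sieve (in the sharp form that uses exactly the hypotheses $\Omega_1$, $\Omega_2(1,L)$, and $R(1,\alpha)$). This produces
$$S(\sA,\sP,X^{1/v})\geq X\,W(X^{1/v})\Bigl(f(\alpha v)-O\bigl(L(\log X)^{-1/14}\bigr)\Bigr),$$
with $f(s)=\tfrac{2e^\gamma}{s}\log(s-1)$ valid on $2\leq s\leq 4$, which is exactly the range guaranteed by the hypothesis $2/\alpha\leq v\leq 4/\alpha$. In parallel the companion upper-bound sieve gives, for each $p$ in the weighted range,
$$S(\sA_p,\sP,X^{1/v})\leq \delta(p)X\,W(X^{1/v})\bigl(F(s_p)+O(L(\log X)^{-1/14})\bigr),$$
where $s_p=\alpha(\log(X/p))/\log X$; here the admissible remainder is absorbed because the effective level for $\sA_p$ is $X^\alpha/p$, which remains $\geq p$ when $p<X^{1/u}$ and $u>1/\alpha$. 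One then substitutes the Mertens-type estimate from $\Omega_2(1,L)$ to replace $W(X^{1/v})/W(X^{1/u})$ and similar ratios by asymptotic constants.

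Third, I would execute the partial summation of the correction sum against the linear weight $\beta(p,\lambda)=\lambda(1-u\log p/\log X)$. Writing $t=\log p/\log X$, the resulting integral
$$\lambda\int_{1/v}^{1/u}(1-ut)\,F\bigl(\alpha(1-t)\bigr)\,\frac{dt}{t}$$
evaluates, after using $F(s)=\tfrac{2e^\gamma}{s}$ on $1\leq s\leq 3$ (again guaranteed by the range of $u,v$), to the two closed-form contributions $\lambda\alpha u\log(v/u)$ and $-\lambda(\alpha u-1)\log\tfrac{\alpha v-1}{\alpha u-1}$. Subtracting these from the main term and collecting the common factor $2e^\gamma/(\alpha v)$ yields exactly $F(\alpha,v,u,\lambda)$.

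The main obstacle is the uniformity of the remainder bookkeeping: one must show that the accumulated remainders from applying the upper-bound sieve to the family $\{\sA_p\}_{p<X^{1/u}}$ together with the single lower-bound application to $\sA$ are all majorized by $R(1,\alpha)$ applied at level $X^\alpha/(\log X)^{A_3}$. This is where the precise hypotheses on $\alpha,u,v$ and the factor $3^{\omega(d)}$ in $R(1,\alpha)$ do their work, and where the error $cL(\log X)^{-1/14}$ in the conclusion comes from. Once this uniform remainder control is established, the assembly of $F(\alpha,v,u,\lambda)$ is routine algebra.
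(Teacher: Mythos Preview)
The paper does not prove this theorem at all: it is quoted verbatim as \cite[Theorem~9.1, Lemma~9.1]{halberstam-sieve} and used as a black box in the subsequent applications. Your proposal is therefore not comparable to ``the paper's own proof'' because there is none; what you have written is a correct outline of the proof as it appears in Halberstam--Richert. The decomposition of $\sW$ into $S(\sA,\sP,X^{1/v})$ minus the $\beta$-weighted sum of $S(\sA_p,\sP,X^{1/v})$, the application of the Jurkat--Richert linear sieve with $f(s)=\tfrac{2e^\gamma}{s}\log(s-1)$ on $[2,4]$ for the lower bound and $F(s)=\tfrac{2e^\gamma}{s}$ on $[1,3]$ for the upper bounds, the partial summation against the linear weight, and the identification of the remainder control with hypothesis $R(1,\alpha)$ are exactly the architecture of Chapter~9 there. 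In short: your sketch is sound and faithful to the cited source, but it goes beyond what the present paper undertakes.
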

\subsection{Proofs of the main theorems}
We will now apply the Theorem~\ref{richert} to the following situation. We will take
$$\sA=\left\{N_p(f): p\leq X\right\},$$
and $$\sP=\left\{p: p\text{ a prime}\right\},$$ so that
$\#\sA=\pi(X)$ and  by the prime number theorem, we may write
$\#\sA=\#\sA_1=\frac{X}{\log(X)}+R_1$. By the Chebotarev density
theorem applied to the extensions $K_{\ell,f}$, we may take
$\delta(\ell)\frac{X}{\log(X)}$ as an approximation to $\#\sA_\ell$,
where $\delta(\ell)=\frac{\#C_{\ell,1}}{\#G_{\ell}}$ (see
\ref{cardinalitylemma}). To get uniform error term in the Chebotarev
density theorem for the extensions $K_\ell$ valid for a range of
$\ell$ we will need GRH and Artin's Holomorphy Conjecture
(especially the version of Chebotarev density theorem of
\cite{murty88}). To apply Theorem~\ref{richert} we have to verify
that the hypothesis $\Omega_1,\Omega_2(1,L)$, and $R(1,\alpha)$ hold
(see \ref{sieve2}). We will do this now.
\begin{lemma}
The hypothesis $(\Omega_1)$ holds with a suitable $A_1>0$, i.e., we
have
$$0\leq \delta(\ell)\leq 1-\frac{1}{A_1},$$
with a suitable $A_1$.
\end{lemma}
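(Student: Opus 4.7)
The plan is to combine the explicit formula derivable from the preceding two lemmas with the asymptotic of Lemma~\ref{cardinalitylemma}. Non-negativity of $\delta(\ell)$ is automatic since $\delta(\ell)=\#C_{\ell,1}/\#G_\ell$ is a ratio of cardinalities, so all the work lies in the upper bound.

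First, I would restrict the sieving set $\sP$ to primes $\ell\nmid m_f$, so that the earlier theorem applies and $\image(\brho_{f,\ell})=G_\ell$; the finitely many excluded primes will not affect whether a constant $A_1$ exists. Next, combining Lemma~\ref{cardinaliylemma} with the order formula $\#G_\ell=(\ell^2-1)(\ell^2-\ell)/\lambda$ gives the clean expression
\begin{equation*}
\delta(\ell)=\frac{\#C_{\ell,1}}{\#G_\ell}=\frac{\ell^2-\lambda-1}{(\ell-1)^2(\ell+1)},
\end{equation*}
where $\lambda=\gcd(k-1,\ell-1)\geq 1$. A direct algebraic check shows that the inequality $\ell^2-\lambda-1<(\ell-1)^2(\ell+1)=\ell^3-\ell^2-\ell+1$ is equivalent to $\ell^3-2\ell^2-\ell+\lambda+2>0$, which is easily verified for every $\ell\geq 2$ and every $\lambda\geq 1$ (the cubic term dominates for $\ell\geq 3$, and the case $\ell=2$ forces $\lambda=1$ and is checked by hand).

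Second, I would invoke Lemma~\ref{cardinalitylemma} directly: the asymptotic $\delta(\ell)=\ell^{-1}+O(\ell^{-3})$ shows that $\delta(\ell)\to 0$ as $\ell\to\infty$. In particular there exists $\ell_0$ such that $\delta(\ell)\leq \tfrac{1}{2}$ for every $\ell\geq \ell_0$ in $\sP$.

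Combining the two steps, set
\begin{equation*}
\theta:=\max\Bigl\{\tfrac{1}{2},\ \max_{\ell\in\sP,\ \ell<\ell_0}\delta(\ell)\Bigr\},
\end{equation*}
which is a maximum over a finite set and satisfies $\theta<1$ by the previous paragraph. Taking $A_1=1/(1-\theta)$ gives $0\leq\delta(\ell)\leq 1-1/A_1$ for every $\ell\in\sP$, as required. The only ``obstacle'' worth flagging is the mild bookkeeping around the small primes and the finitely many primes dividing $m_f$; both are handled by excluding them from $\sP$ (which is harmless for the one-dimensional sieve) or by absorbing them into the finite maximum defining $\theta$.
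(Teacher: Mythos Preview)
Your proof is correct and follows essentially the same approach as the paper, which simply cites the asymptotic $\delta(\ell)=\tfrac{1}{\ell}+O(\ell^{-3})$ from Lemma~\ref{cardinalitylemma} and declares the result clear. Your argument is more careful in that you also write out the exact formula and verify $\delta(\ell)<1$ for every prime, handling the small primes explicitly rather than leaving them implicit in the big-$O$.
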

\begin{proof}
This is clear from the fact that
$\delta(\ell)=\frac{\#C_{\ell,1}}{\#G_\ell}=\frac{1}{\ell}+O(\frac{1}{\ell^3})$
(see Lemma~\ref{cardinalitylemma}).
\end{proof}
\begin{lemma}
The hypothesis $(\Omega_2(1,L))$ holds with a suitable $L$, i.e., there exists an $A_2\geq 1$
and an $L$ such that for $2\leq w\leq z$, we have
$$-L\leq \sum_{w\leq
p<z}\delta(\ell)\log(\ell)-\log\frac{z}{w}\leq A_2$$
\end{lemma}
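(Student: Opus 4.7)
The plan is to reduce this hypothesis directly to Mertens' second theorem. By Lemma~\ref{cardinalitylemma} we have $\delta(\ell) = \ell^{-1} + O(\ell^{-3})$ for all sufficiently large primes $\ell$, so that $\delta(\ell)\log\ell$ agrees with $(\log\ell)/\ell$ up to a summable error. The contribution from the finitely many exceptional small primes is trivially bounded (since $\delta(\ell) \leq 1$ in any case), and this contribution can be absorbed into the constants $L$ and $A_2$ at the end.

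First I would write
$$\sum_{w \leq \ell < z} \delta(\ell)\log\ell \;=\; \sum_{w \leq \ell < z} \frac{\log\ell}{\ell} \;+\; \sum_{w \leq \ell < z} O\!\left(\frac{\log\ell}{\ell^{3}}\right),$$
observing that the second sum is bounded above by the convergent series $\sum_\ell (\log\ell)/\ell^3$, so it contributes $O(1)$ uniformly in $w, z$. Next, Mertens' second theorem supplies
$$\sum_{\ell < x}\frac{\log\ell}{\ell} \;=\; \log x + M + o(1)$$
for Mertens' constant $M$, and subtracting the values at $x = z$ and $x = w$ gives $\sum_{w \leq \ell < z}(\log\ell)/\ell = \log(z/w) + O(1)$ uniformly in the range $2 \leq w \leq z$. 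Combining the two steps yields
$$\left|\sum_{w \leq \ell < z}\delta(\ell)\log\ell - \log(z/w)\right| = O(1),$$
from which both the two-sided bound and the concrete values of $A_2$ and $L$ follow by taking implicit constants.

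There is no substantive obstacle here; the argument is a routine reduction of the sieve hypothesis $\Omega_2(1,L)$ to classical analytic-number-theoretic input, made possible by the sharp asymptotic for $\delta(\ell)$ proved in Lemma~\ref{cardinalitylemma}. The only minor point to verify is the uniformity of the $o(1)$ term in Mertens' theorem across all $w, z \geq 2$, which is standard and in particular does not require GRH.
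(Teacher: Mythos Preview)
Your argument is correct and is precisely the one the paper sketches: the paper's proof consists of the single sentence ``This is again clear from Lemma~\ref{cardinalitylemma} and Mertens's Theorem,'' invoking exactly the asymptotic $\delta(\ell)=\ell^{-1}+O(\ell^{-3})$ and the classical estimate $\sum_{\ell<x}(\log\ell)/\ell=\log x+O(1)$, which you have simply written out in full.
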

\begin{proof}
This is again clear from Lemma~\ref{cardinalitylemma} and Mertens's
Theorem (see \cite[page 351]{hardy-wright}). Indeed
$\delta(\ell)=\frac{1}{\ell}+O(\frac{1}{\ell^3})$.
\end{proof}
The next step is to establish that $R(1,\alpha)$ holds. This is
where we use GRH and Artin's Holomorphy Conjecture. To prove
$R(1,\alpha)$ holds we need a form of Chebotarev density theorem
currently available under GRH and Artin's Holomorphy Conjecture (see
\cite{murty88}).
\begin{lemma}\label{main2lemma}
Let $f$ be a newform satisfying our hypothesis~\ref{form}. Let
$$R_d=\pi_f(X,d)-\delta(d)Li(X).$$ Assume GRH and Artin's Holomorphy Conjecture for all Artin
$L$-functions. Then hypothesis $(R(1,\alpha))$ holds with any
$\alpha<1/5$, i.e., we have for any $\alpha<1/5$:
$$\sum_{d<X^\alpha/(\log(X))^B}\mu(d)^23^{\omega(d)}\abs{R_d}\ll
\frac{X}{(\log{X})^2}$$
\end{lemma}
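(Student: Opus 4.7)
The strategy is to translate the divisibility $d \mid N_p(f)$ (for squarefree $d$ coprime to $M$ and to the exceptional set of Serre's theorem recalled earlier) into a Chebotarev condition on $Frob_p$ in a suitable number field $K_d$, apply an effective Chebotarev density theorem under GRH and Artin's Holomorphy Conjecture, and sum the resulting estimates over $d$.

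For $d = \ell_1 \cdots \ell_r$ squarefree, the Proposition above (applied with $n=1$) shows that $d \mid N_p(f)$ iff $\brho_{f,\ell_i}(Frob_p)$ has $1$ as an eigenvalue for each $i$, i.e.\ iff $Frob_p$ lies in the conjugacy set $C_d := \prod_i C_{\ell_i,1}$ inside $\Gal(K_d/\Q) = \prod_i G_{\ell_i}$, where $K_d := K_{f,\ell_1}\cdots K_{f,\ell_r}$. Linear disjointness of the $K_{f,\ell_i}$ for distinct $\ell_i$ outside the finite exceptional set follows from Serre's surjectivity theorem. Lemma~\ref{cardinaliylemma} and the order formula for $G_\ell$ then give
$$[K_d:\Q] = |G_d| \ll d^{4}, \qquad |C_d| \ll d^{3}, \qquad \delta(d) = \frac{|C_d|}{|G_d|} \asymp \frac{1}{d},$$
while the conductor--discriminant formula (only primes dividing $dM$ ramify in $K_d/\Q$) yields $\log|d_{K_d}| \ll [K_d:\Q]\log(dM)$.

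With these data the effective Chebotarev density theorem of M.R.~Murty, V.K.~Murty and Saradha \cite{murty88}, valid under GRH and Artin's Holomorphy Conjecture, delivers a bound of the shape
$$|R_d| = \bigl|\pi_f(X,d) - \delta(d)\Li(X)\bigr| \ll d^{A}\, X^{1/2}(\log X)^{C}$$
for explicit constants $A, C$ depending on $f$. Combined with the standard bound $\sum_{d \leq Y} \mu(d)^2\, 3^{\omega(d)} d^A \ll Y^{A+1}(\log Y)^2$ and the choice $Y = X^\alpha/(\log X)^B$, this gives
$$\sum_{d < Y} \mu(d)^2\, 3^{\omega(d)}\, |R_d| \ll X^{1/2} Y^{A+1}(\log X)^{C+2} \ll \frac{X}{(\log X)^2}$$
whenever $\alpha < 1/(2A+2)$. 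The desired range $\alpha < 1/5$ thus corresponds to $A < 3/2$.

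The main obstacle is extracting the sharp value of $A$. A crude application of Murty--Murty--Saradha, inserting the discriminant bound $\log|d_{K_d}| \ll [K_d:\Q]\log(dM) \asymp d^4 \log d$ into a generic Chebotarev error of the form $|C_d|^{1/2} X^{1/2}(\log|d_{K_d}| + [K_d:\Q]\log X)$, yields an $A$ that is far too large. To drive $A$ below $3/2$ one must exploit the product structure $G_d = \prod_i G_{\ell_i}$: irreducible representations of $G_d$ are external tensor products of irreducibles of the individual factors, their Artin conductors factorize multiplicatively, and the character sums controlling the Chebotarev error split as products over the $\ell_i$ in which the decay $|C_{\ell,1}|/|G_\ell| \asymp 1/\ell$ from Lemma~\ref{cardinalitylemma} is exploited factor by factor. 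Performing this bookkeeping---in parallel to the elliptic-curve calculation of \cite{steuding05,steuding05e}---is the heart of the argument.
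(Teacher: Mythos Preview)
Your setup is correct and matches the paper's: translate $d\mid N_p(f)$ into a Chebotarev condition for the conjugacy set $C_d\subset G_d=\prod_i G_{\ell_i}$, invoke the effective Chebotarev theorem of \cite{murty88} under GRH and Artin's Holomorphy Conjecture to bound $|R_d|$, and sum. The paper's proof, however, is much shorter than you suggest, because the step you label ``the heart of the argument'' is not needed.

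The issue is your ``crude'' bound. The form $|C_d|^{1/2}X^{1/2}\bigl(\log|d_{K_d}|+[K_d:\Q]\log X\bigr)$ is not what \cite{murty88} gives under Artin's Holomorphy Conjecture; it is closer to the Lagarias--Odlyzko shape under GRH alone. Under GRH together with Artin, the result of \cite{murty88} yields
\[
|R_d|\ \ll\ |C_d|^{1/2}\,X^{1/2}\Bigl(\log X+\frac{1}{|G_d|}\log|d_{K_d}|\Bigr),
\]
and since only primes dividing $dM$ ramify in $K_d/\Q$, Hensel's bound gives $\log|d_{K_d}|\ll |G_d|\log(dM)$, so the parenthesis is $\ll\log(dX)$. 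With $|C_d|\asymp d^3$ this is exactly $|R_d|\ll d^{3/2}X^{1/2}\log(dX)$, i.e.\ $A=3/2$ directly. The whole point of assuming Artin holomorphy is precisely to eliminate the degree $[K_d:\Q]$ from the error term; no separate exploitation of the tensor-product structure of $\widehat{G_d}$ is required here (that work, to the extent it is needed, is already inside the proof of the theorem you are citing).

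Accordingly, the paper simply quotes $|R_d|=O(d^{3/2}X^{1/2}\log(dX))$ from \cite{murty88}, bounds $3^{\omega(d)}\ll d^\varepsilon$ via the divisor bound, and sums trivially:
\[
\sum_{d<X^\alpha/(\log X)^B}d^{3/2+\varepsilon}X^{1/2}\log(dX)\ \ll\ X^{1/2+(5/2)\alpha+\varepsilon},
\]
which is $\ll X/(\log X)^2$ for any $\alpha<1/5$. Your summation estimate $\sum_{d\le Y}\mu(d)^2 3^{\omega(d)}d^A\ll Y^{A+1}(\log Y)^2$ works too and gives the same threshold $\alpha<1/(2A+2)=1/5$; the paper's $d^\varepsilon$ shortcut is just tidier.
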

\begin{proof}
Observe that we have from \cite[page 260]{hardy-wright}, that
$$3^{\omega(n)}\leq d(n)^{3\log 3/\log 2} \ll n^\varepsilon.$$
Thus we have
$$\sum_{d<X^\alpha/(\log X)^B}\mu^2(d)3^{\omega(d)}\abs{R_d}\ll
\sum_{d<X^\alpha/(\log X)^B}d^\varepsilon\abs{R_d}.$$ Assuming GRH
and Artin's Holomorphy Conjecture and by \cite{murty88} we have
$$\abs{R_d}=O\left(d^{3/2}X^{1/2}\log(dX)\right).$$
So the sum in question is certainly
$$\ll\sum_{d<X^\alpha/(\log(X))^B}d^{3/2+\varepsilon}X^{1/2}\log(dX),$$
which is
$$\ll X^{1/2}\log(X)\sum_{d<X^\alpha/(\log
X)^B}d^{3/2+\varepsilon},$$ and this is, by partial summation,
$$\ll X^{1/2+(5/2)\alpha+\epsilon}.$$
For $\alpha<1/5$, the sum in the assertion is
$$\ll X^{1/4+\varepsilon}$$
and clearly this is certainly $\ll \frac{X}{(\log X)^2}$. This
proves the assertion.
\end{proof}
\begin{lemma}
For $X\gg 0$, we have
$$W(X)\gg \frac{1}{\log X}$$
\end{lemma}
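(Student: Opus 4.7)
The plan is to compare $W(X)$ to the classical Mertens product and show that the ratio converges to a positive constant. Write
\begin{equation*}
W(X) \;=\; \prod_{\ell < X}\bigl(1-\delta(\ell)\bigr) \;=\; \Biggl(\prod_{\ell < X}\Bigl(1 - \tfrac{1}{\ell}\Bigr)\Biggr)\cdot \prod_{\ell < X}\frac{1-\delta(\ell)}{1-1/\ell}.
\end{equation*}
Mertens' theorem (see \cite[page 351]{hardy-wright}) gives
\begin{equation*}
\prod_{\ell < X}\Bigl(1-\tfrac{1}{\ell}\Bigr) \;\sim\; \frac{e^{-\gamma}}{\log X},
\end{equation*}
so it suffices to show that the second product converges to a nonzero limit as $X\to\infty$.

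By Lemma~\ref{cardinalitylemma} we have $\delta(\ell) = \frac{1}{\ell} + O\bigl(\frac{1}{\ell^3}\bigr)$ for $\ell$ large. Hence for $\ell$ large
\begin{equation*}
\frac{1-\delta(\ell)}{1-1/\ell} \;=\; 1 + \frac{1/\ell - \delta(\ell)}{1-1/\ell} \;=\; 1 + O\!\left(\frac{1}{\ell^{3}}\right).
\end{equation*}
Since $\sum_{\ell}\ell^{-3}$ converges, the tail of the corresponding logarithm converges absolutely; together with the finitely many small primes (each contributing a nonzero factor, using hypothesis $(\Omega_1)$ to ensure $1-\delta(\ell)>0$), the product $\prod_{\ell}\frac{1-\delta(\ell)}{1-1/\ell}$ converges to a strictly positive constant $C(f) > 0$.

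Combining, for $X$ sufficiently large,
\begin{equation*}
W(X) \;\geq\; \frac{C(f)\,e^{-\gamma}}{2\log X} \;\gg\; \frac{1}{\log X},
\end{equation*}
which proves the lemma. The only substantive step is the convergence of the correction product, and that reduces immediately to the $O(\ell^{-3})$ estimate from Lemma~\ref{cardinalitylemma}; there is no real obstacle.
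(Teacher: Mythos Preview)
Your proof is correct and follows exactly the approach the paper indicates: the paper's own proof is a one-line appeal to the estimate $\delta(\ell)=\tfrac{1}{\ell}+O(\ell^{-3})$ from Lemma~\ref{cardinalitylemma} together with Mertens' theorem, and you have simply supplied the details of that argument.
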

\begin{proof}
This is clear from our estimates for $\delta(\ell)$ and Mertens's
Theorem~\cite[page 351]{hardy-wright}
\end{proof}

\subsection{Choice of sieve parameters}
Thus we can apply Theorem~\ref{richert} to our situation and we will make this explicit now. To
apply Theorem~\ref{richert} we need to choose $\alpha,u,v,\lambda$ satisfying conditions of the
theorem.

We choose as follows. We will take $k\geq 3$ and:
\begin{eqnarray}
\alpha&=&\frac{1}{5}-\frac{1}{5k}=\frac{k-1}{5k},\\
u&=&\frac{5k}{k-1}+\frac{1}{k-1}=\frac{5k+1}{k-1}\\
v&=&\frac{4}{\alpha}=\frac{20k}{k-1},\\
\lambda&=&\frac{1}{\sqrt{\log(k)}}.
\end{eqnarray}
Then we have
$$\frac{1}{\alpha}=\frac{5k}{k-1}<u=\frac{5k+1}{k-1}<v=\frac{4}{\alpha}=\frac{20k}{k-1}.$$
With these choices, we define
$$G_1(k)=F\left(\frac{k-1}{5k},\frac{20k}{k-1},\frac{5k+1}{k-1},\frac{1}{\sqrt{\log(k)}}\right)$$
explicitly this is given by
\begin{equation}
G_1(k)=\frac{e^\gamma\left(5k\log(3)\sqrt{\log(k)}+\log(15k)-(1+5k)\log(20k/1+5k)\right)}{10k\sqrt{\log(k)}}.
\end{equation}
It is clear that for $k\gg0$, one has $G_1(k)>0$ and numerically one
checks that for all $k\geq 3.039\cdots$, we have $G_1(k)>0$. Thus we
have that $F(\alpha,v,u,\lambda)>0$ for these choices of the
parameters. So we can apply Theorem~\ref{richert}, and note that by
the prime number theorem  (or by Chebyshev's Theorem) we have
$\#\sA\gg\frac{X}{\log X}$ and so we  deduce that
$$\sW(\sA,\sP,u,v,\lambda)\gg
\frac{X}{(\log X)^2}.$$

Now suppose that $p$ is such that $N_p$ has positive weight in the
sum $\sW(\sA,\sP,u,v,\lambda)$ then we claim that
$\omega(N_p(f))\leq [5k+\sqrt{\log(k)}]$. This will prove our
theorem. Indeed as the sum is positive for the above choices of
parameters, and so there are primes $p\leq X$ where $N_p(f)$ has
this property.

Now observe that for any such $p$, the``weight'' it contributes is
positive so
\begin{equation}\label{weightp}
0<{\bf w}(p)=1-\lambda\left(\sum_{X^{1/v}<q<X^{1/u},
q|N_p(f)}\left(1-u\frac{\log(q)}{\log(X)}\right)\right),
\end{equation}
and $N_p(f)$ has no prime divisors $q\leq X^{1/v}$; moreover any
prime divisor $q$ of $N_p(f)$ with $q\geq X^{1/u}$ we have
$$1-u\frac{\log(q)}{\log(X)}\leq 0,$$
and so even if we include the contribution of primes $q>X^{1/u}$, in
the sum \eqref{weightp}  we see that
$$0<1-\lambda\left(\sum_{q|N_p(f)}\left( 1 -u\frac{\log(q)}{\log(X)}\right)\right),$$
and this gives
\begin{eqnarray}
\omega(N_p(f))&=&\sum_{q|N_p(f)}1<u\sum_{q|N_p(f)}\frac{\log(q)}{\log(X)}+\frac{1}{\lambda}\\
&=&u\frac{\log(N_p(f))}{\log(X)}+\frac{1}{\lambda}\\
\end{eqnarray}
Now we use the Deligne-Ramanujan-Weil estimate:
$$N_p(f)=p^{k-1}+1-a_p(f)\leq p^{k-1}+1+2p^{(k-1)/2},$$
and we deduce that for sufficiently large $X$ we have
$$\sum_{q|N_p(f)}\frac{\log(q)}{\log(X)}\leq \frac{\log(N_p(f))}{\log(X)}$$
By the Deligne-Ramanujan-Weil estimate the last term is bounded by
$\frac{\log(X^{k-1}+1+2X^{(k-1)/2})}{\log(X)}$ and this is
$$\leq k-1+\frac{\log(1-X^{-(k-1)}+2X^{-(k-1)/2})}{\log(X)}$$
For any $\varepsilon>0$, we can find $X\geq X_0(k,\varepsilon)$ such that the second term in the
above is less than $\varepsilon\frac{(k-1)}{(5k+1)}$. So we have $$\omega(N_p(f))\leq
u\left((k-1)+\varepsilon\frac{k-1}{5k+1}\right)+\frac{1}{\lambda},$$ and now our assertion
follows using the fact that we have chosen $$u=\frac{(5k+1)}{k-1},
\lambda=\frac{1}{\sqrt{\log(k)}}.$$ Thus
$$\omega(N_p(f))\leq 5k+1+\sqrt{\log(k)}+\varepsilon.$$
For a fixed $k$ and $X$ suitably large, we may choose $\varepsilon>0$ so small that
$$[5k+1+\sqrt{\log(k)}+\varepsilon]=[5k+1+\sqrt{\log(k)}]$$ and so the first  assertion follows.

To prove the second assertion, we observe that we need to estimate the number of primes $p\leq
X$ which contribute to the sifting function with positive weights and have a prime divisor
$\ell|N_p(f)$ with $\ell^2|N_p(f)$ and $X^{1/v}\leq \ell\leq X^{1/u}$. We will follow the
argument of \cite{steuding05,steuding05e} to do this. We easily estimate the number of elements
of $C_{\ell,2}$ to as indicated in \cite{steuding05,steuding05e} and obtain
$$\frac{\#C_{\ell,2}}{\#G_\ell}=\frac{1}{\ell^2}+O(\ell^{-3}).$$
Thus we have
\begin{eqnarray*}
 \#\left\{ p\leq X: \ell^2|N_p(f), X^{1/v}\leq \ell\leq X^{1/u}\right\}&=&\sum_{X^{1/v}\leq \ell\leq X^{1/u}}\#\{p\leq X: \ell^2|N_p(f)\}.\\
&\ll&\frac{X}{\log(X)}\sum_{X^{1/v}\leq \ell\leq X^{1/u}}\frac{1}{\ell^2}+X^{1/2+\varepsilon}\sum_{X^{1/v}\leq \ell\leq X^{1/u}} \ell^3\\
&=&o\left(\frac{X}{\log(X)^2}\right),
\end{eqnarray*}
provided $u>8$. So we choose a new set of $u,v,\lambda$ as follows:
\begin{eqnarray}
 \alpha&=&\frac{k-1}{5k},\\
 u&=&\frac{8k+1}{k-1},\\
 v&=&\frac{16k}{k-1},\\
\lambda&=&\frac{1}{\sqrt{\log(k)}}.
\end{eqnarray}
Then we see that
$$G_2(k)=F\left(\frac{k-1}{5k},\frac{16k}{k-1},\frac{8k+1}{k-1},\frac{1}{\sqrt{\log(k)}}\right).$$
Explicitly we have
$$G_2(k)=\frac{e^\gamma}{16k\sqrt{\log(k)}}\left\{5\log(11/5)k\sqrt{\log(k)}+(3k+1)\log\left(\frac{11k}{3k+1}\right)-(8k+1)\log\left(\frac{16k}{8k+1}\right)
\right\}$$ and it is easy to see that $G_2(k)>0$ for $k\gg0$ and
numerically one has $G_2(k)$ is positive for $k\geq 4$.

Thus for these choices of $u,v,\lambda$ the  primes $p\leq X$ such that $N_p(f)$ has a small
square divisor do not contribute to the lower bound for the sifting function. Let $p$ be such a
prime, so that $N_p(f)$ makes a positive contribution to the sum $\sW$. Then  for such a $p$,
$N_p$ does not have any prime divisors less than $X^{1/v}$, and for primes $\ell|N_p(f)$ such
that $X^{1/v}<\ell<X^{1/u}$,  $\ell^2$ does not divide $N_p(f)$, while for the primes
$\ell>X^{1/u}$ which divide $N_p(f)$ (possibly dividing several times) the contribution to $\sW$
is always positive. So we may replace, in our previous argument, the function $\omega(N_p(f))$
by $\Omega(N_p(f))$. Indeed we have
$$0<{\bf w}(p)<1-\lambda\left(\sum_{q^m|N_p(f)}\left( 1 -u\frac{\log(q)}{\log(X)}\right)\right),$$
and this gives
\begin{eqnarray}
\Omega(N_p(f))&=&\sum_{q^m|N_p(f)}1<u\sum_{q^m|N_p(f)}\frac{\log(q)}{\log(X)}+\frac{1}{\lambda}\\
&=&u\frac{\log(N_p(f))}{\log(X)}+\frac{1}{\lambda}\\
\end{eqnarray}
Now we use the Deligne-Ramanujan-Weil estimate (note that
$\chi(p)=1$ for all but finite number of primes as $\chi$ is
trivial):
$$N_p(f)=\chi(p)p^{k-1}+1-a_p(f)\leq p^{k-1}+1+2p^{(k-1)/2},$$
and we deduce that for sufficiently large $X$ we have
$$\sum_{q|N_p(f)}\frac{\log(q)}{\log(X)}\leq \frac{\log(N_p(f))}{\log(X)}$$
By the Weil estimate the last term is bounded by $\frac{\log(X^{k-1}+1+2X^{(k-1)/2})}{\log(X)}$
and this is
$$\leq k-1+\frac{\log(1-X^{-(k-1)}+2X^{-(k-1)/2})}{\log(X)}$$
For any $\varepsilon>0$, we can find $X\geq X_0(k,\varepsilon)$ such
that the second term in the above is less than
$\varepsilon\frac{(k-1)}{(5k+1)}$. So we have $$\Omega(N_p(f))\leq
u\left((k-1)+\varepsilon\frac{k-1}{5k+1}\right)+\frac{1}{\lambda},$$
and now our assertion follows using the fact that we have chosen
$$u=\frac{(8k+1)}{k-1}, \lambda=\frac{1}{\sqrt{\log(k)}}.$$ Thus
$$\Omega(N_p(f))\leq 8k+1+\sqrt{\log(k)}+\varepsilon.$$
For a fixed $k$ and $X$ suitably large, we may choose $\varepsilon>0$ so small that
$$[8k+1+\sqrt{\log(k)}+\varepsilon]=[8k+1+\sqrt{\log(k)}]$$ and so the first  assertion follows.
Thus we see that
$$\#\left\{p\leq X: \Omega(N_p(f))\leq 8k+1+\sqrt{\log(k)}\right\}\gg\sW(\sA,\sP,v,u,\lambda)\gg c_2(f)\frac{X}{\log(X)^2}.$$
This proves our assertion.
\subsection{} Now we can prove the corollary. For the Ramanujan modular form
$$\Delta(q)=\sum_{n=1}^\infty\tau(n)q^n$$
we have $k=12$ and hence we deduce assuming  GRH and Artin's
Holomorphy Conjecture for Artin $L$-functions that, there exists
infinitely many prime $p$
$$3\leq \omega(p^{11}+1-\tau(p)) \leq [61+\sqrt{\log(12)}]=62.$$
So on GRH and Artin's Holomorphy Conjecture, there exists infinitely
many primes $p$, such that $p^{11}+1-\tau(p)$ has at most
$[61+\sqrt{\log(12)}]=62$ prime factors. Moreover there also exists
infinite many primes $p$ such that
$$7\leq \Omega(p^{11}+1-\tau(p)) \leq [97+\sqrt{\log(12)}]=98.$$
As indicated in the introduction, the bounds $\omega(N_p(\Delta))\geq 3$ and
$\Omega(N_p(\Delta))\geq 7$ are consequences of the Ramanujan congruences for $\Delta(q)$. This
completes the proof of Theorem~\ref{ramcor2}.

\section{Upper bounds}\label{upperbounds}
\subsection{}
We can also obtain an upper bound (again under GRH and Artin's
Holomorphy Conjecture) which shows that the lower bounds are of the
right order of magnitude. The upper bound is obtained using
Selberg's sieve.
\begin{theorem}\label{upperbound2}
Let $f(q)=\sum_{n=1}^\infty a_n(f)q^n$ be a newform satisfying
hypothesis~\ref{form}. Assume GRH and Artin's Holomorphy Conjecture
for Artin $L$-functions. Then we have
$$\#\left\{ p\leq X: \Omega(N_p(f))\leq 9k-8\right\}\ll \frac{X}{(\log X)^2}.$$
\end{theorem}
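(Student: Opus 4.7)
The plan is to apply Selberg's $\Lambda^2$ upper-bound sieve to the sequence $\sA=\{N_p(f):p\leq X\}$, reusing the sieve data already assembled for Theorem~\ref{main2}: by Lemma~\ref{cardinalitylemma} the density satisfies $\delta(\ell)=1/\ell+O(1/\ell^3)$, and by Lemma~\ref{main2lemma} the level-of-distribution hypothesis $(R(1,\alpha))$ holds for any $\alpha<1/5$ under GRH and Artin's Holomorphy Conjecture (and $(\Omega_1)$, $(\Omega_2(1,L))$ were already verified).

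First I would invoke the classical Selberg $\Lambda^2$-sieve (see, e.g., \cite[Chapter 3]{halberstam-sieve}) with sifting parameter $z=X^{\alpha/2}$ for $\alpha$ just below $1/5$, so that $z\approx X^{1/10}$ and $z^{2}$ lies within the level of distribution. Since $\delta(\ell)\sim 1/\ell$, Mertens's theorem yields
\[
V(z):=\sum_{\substack{d<z^{2}\\ d\mid P(z)}}\prod_{\ell\mid d}\frac{\delta(\ell)}{1-\delta(\ell)}\gg\log z\gg\log X,
\]
while $|\sA|=\pi(X)\sim X/\log X$. Selberg's bound then produces
\[
S(\sA,\sP,z):=\#\bigl\{p\le X:\gcd(N_p(f),P(z))=1\bigr\}\ll\frac{|\sA|}{V(z)}\ll\frac{X}{(\log X)^2}.
\]

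Second I would connect the target set $\{p\le X:\Omega(N_p(f))\le 9k-8\}$ to $S(\sA,\sP,z)$. The Deligne--Weil estimate $N_p(f)\le 2X^{k-1}$ ensures that any $p$ counted by $S(\sA,\sP,z)$ has all prime factors of $N_p(f)$ of size at least $z\approx X^{1/10}$, giving $\Omega(N_p(f))\le 10(k-1)$; conversely the remaining contributions come from $p$ for which some $\ell\le z$ divides $N_p(f)$ while still $\Omega(N_p(f))\le 9k-8$, in which case $N_p(f)/\ell$ is an integer with $\Omega\le 9k-9$. I would bound this exceptional contribution by applying Selberg's sieve to the shifted sequences $\{N_p(f)/\ell\}_{p\le X}$ for each $\ell\le z$ and summing; the threshold $9k-8$ in the statement is calibrated to match (up to lower-order terms) the complementary lower bound $8k+1+\sqrt{\log k}$ from Theorem~\ref{main2}, so that the two inequalities together pin down the correct order of magnitude.

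The main obstacle I foresee lies precisely in the exceptional contribution: a naive Chebotarev bound $\#\{p\le X:\ell\mid N_p(f)\}\ll X/(\ell\log X)$ summed over $\ell\le z$ picks up an unwanted factor of $\log\log X$. To avoid this, one must exploit the joint constraint that $\ell\mid N_p(f)$ and $\Omega(N_p(f)/\ell)\le 9k-9$ via an iterated (or bilinear) Selberg sift, where each of the finitely many remaining prime factors of $N_p(f)/\ell$ again has density $\sim 1/\ell'$. Because $9k-8$ is a fixed integer (depending only on the weight $k$), this iteration terminates and each stage contributes $O(X/(\log X)^{2})$, which is what the sieve must be arranged to show.
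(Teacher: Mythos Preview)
Your opening move---Selberg's upper-bound sieve applied to $\sA=\{N_p(f):p\le X\}$---is exactly what the paper does, except that the paper takes $z=X^{1/9}$ rather than $z\approx X^{1/10}$. With that choice the paper records $\sS(\sA,\sP,X^{1/9})\ll X/(\log X)^2$ from \cite[Theorem~5.1]{halberstam-sieve}, and then observes via $N_p(f)<2X^{k-1}$ that any $p$ counted by $\sS(\sA,\sP,X^{1/9})$ has every prime factor of $N_p(f)$ exceeding $X^{1/9}$, whence $X^{\Omega(N_p(f))/9}<N_p(f)<2X^{k-1}$ and so $\Omega(N_p(f))\le 9(k-1)+1=9k-8$. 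That is the entire argument in the paper; there is no ``exceptional contribution'' step, and the constant $9k-8$ is calibrated to the sifting level $z=X^{1/9}$, not to the lower-bound constant $8k+1+\sqrt{\log k}$ as you suggest.

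You have, however, put your finger on a genuine issue. The argument just described only establishes the inclusion
\[
\bigl\{p\le X:\gcd\bigl(N_p(f),P(X^{1/9})\bigr)=1\bigr\}\ \subseteq\ \bigl\{p\le X:\Omega(N_p(f))\le 9k-8\bigr\},
\]
and an upper bound for the left-hand side does not bound the right-hand side from above: a prime $p$ with $\Omega(N_p(f))$ small may perfectly well have $N_p(f)$ divisible by some $\ell<X^{1/9}$. The paper's proof makes precisely this slip and never addresses those $p$. Your proposed iterated-sieve repair does not close the gap either---dividing out a small $\ell$ and re-sieving $N_p(f)/\ell$ is not a well-posed sieve over $p$, and the obstacle you yourself flag (the unwanted $\log\log X$ from summing over $\ell\le z$) is real and is not removed by iterating a bounded number of times. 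The honest conclusion is that what both you and the paper actually establish is the upper bound on the sifted count $\sS(\sA,\sP,X^{1/9})$, which together with Theorem~\ref{main2} pins that quantity at order $X/(\log X)^2$; the displayed upper bound on the larger set $\{p\le X:\Omega(N_p(f))\le 9k-8\}$ is not delivered by either argument.
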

\begin{proof}
Let $\sA=\{N_p(f): p\leq X\}$, $\sP$ be the set of all primes. Let
$P(z)=\prod_{p<z,p\in\sP} p$; and let
$$\sS(\sA,\sP,z)=\sum_{a\in\sA, (a,P(z))=1}1.$$ Then \cite[Theorem~5.1,
Chapter~5]{halberstam-sieve} provides a convenient way for
estimating $\sS(\sA,\sP,z)$ under the hypothesis $(\Omega_1),
(\Omega_2(1,L))$ (see \ref{sieve2}). Since we have already verified
that these hypothesis hold in our setup, we can proceed to apply the
\cite[Theorem~5.1, Chaper~5]{halberstam-sieve} with $z=X^{1/9}$ and
we obtain
$$\sS(\sA,\sP,X^{1/9})\ll \frac{X}{(\log X)^2}.$$
Let  $p\leq X$ be a prime such that $N_p(f)$ contributes to the sum $\sS(\sA,\sP,X^{1/9})$, then
for any prime $\ell|N_p(f)$, we have $\ell>X^{1/9}$. So that we see that
$$N_p(f)>X^{\Omega(N_p(f))/9}.$$
On the other hand, for $p\leq X$, and $X\gg 0$, we have $N_p(X)\leq
2X^{k-1}$, so that we have
$$X^{\Omega(N_p(f))/9}<N_p(f)<2X^{k-1},$$
from which we see that one certainly has $\Omega(N_p(f))\leq 9(k-1)+1$. This proves the theorem.
\end{proof}

\section{Results on GRH}\label{ongrh}
\subsection{} We indicate briefly, the results one can obtain on GRH (as opposed to GRH and Artin Holomorphy
conjecture). As one might expect the results are weaker than the
ones obtained in the preceding sections. The main ingredient of the
proof which is influenced by GRH or GRH and Artin Holomorphy
conjecture is the Chebotarev density theorem of \cite{murty88}. On
GRH the error terms in the Chebotarev density theorem are weaker and
the bounds on $\omega(N_p(f))$ and $\Omega(N_p(f))$ are
correspondingly weaker. Both sets of hypothesis do however imply the
existence of infinitely many primes where $\Omega(N_p(f))$ (and
hence $\omega(N_p(f))$) is bounded by a constant depending on the
weight of $f$. To keep the discussion brief we will prove the result
for $\omega(N_p(f))$.
\begin{theorem}\label{main2grh}
Assume GRH  for Artin $L$-functions and suppose that $f$ is a
newform satisfying hypothesis~\ref{form}. Then there exists a
positive constant $c_1(f)$ depending on $f$ such that for $X\gg0$,
one has
$$\#\left\{p\leq X: \omega(N_p(f))\leq [8k+1+\sqrt{\log(k)}]\right\}\geq c_1(f)\frac{X}{\log(X)^2}.$$
\end{theorem}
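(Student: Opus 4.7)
The plan is to mirror the proof of Theorem~\ref{main2} almost verbatim, changing only the input from the effective Chebotarev density theorem and re-optimizing the sieve parameters. I would use the same data
\[
\sA=\{N_p(f):p\leq X\},\qquad \sP=\{\text{rational primes}\},\qquad \delta(\ell)=\frac{\#C_{\ell,1}}{\#G_\ell}=\frac{1}{\ell}+O\!\left(\frac{1}{\ell^3}\right),
\]
whence the sieving hypotheses $(\Omega_1)$ and $(\Omega_2(1,L))$ of~\ref{sieve2} remain valid for free, since they depend only on the asymptotic of $\delta(\ell)$ and on Mertens' theorem, not on any equidistribution statement.

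The only substantive change lies in the verification of $(R(1,\alpha))$. In place of the Murty-Murty-Saradha bound used in Lemma~\ref{main2lemma}, which requires AHC, I would invoke the Lagarias-Odlyzko form of the effective Chebotarev theorem (as stated in~\cite{murty88}), which assumes only GRH and whose error is controlled by the discriminant $\log|d_L|$ rather than by the Artin conductors. For the towers $\{K_{f,d}\}$, the bounds $[K_{f,d}:\Q]\ll d^3$ and $\log|d_{K_{f,d}}|\ll d^3\log(dM)$ plug in to produce an error of shape
\[
|R_d|\ll d^{3}\,X^{1/2}\log(dX),
\]
in place of the $d^{3/2}$ of Lemma~\ref{main2lemma}. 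The very same partial summation carried out in Lemma~\ref{main2lemma} then yields
\[
\sum_{d<X^\alpha/(\log X)^{A_3}}\!\!\mu(d)^2\,3^{\omega(d)}\,|R_d|\;\ll\; X^{1/2+4\alpha+\varepsilon},
\]
which is $\ll X/(\log X)^2$ precisely when $\alpha<1/8$. Hence $(R(1,\alpha))$ holds for every $\alpha<1/8$ on GRH alone.

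Given this, I would choose
\[
\alpha=\frac{k-1}{8k},\qquad u=\frac{8k+1}{k-1},\qquad v=\frac{4}{\alpha}=\frac{32k}{k-1},\qquad \lambda=\frac{1}{\sqrt{\log k}},
\]
so that $1/\alpha<u<v=4/\alpha$ as required by Theorem~\ref{richert}. Then, in perfect analogy with the verification that $G_1(k)>0$ in the proof of Theorem~\ref{main2}, I would check that the associated $F(\alpha,v,u,\lambda)$ stays positive for $k\geq 4$; the $\sqrt{\log k}$ term coming from $\lambda$ dominates for large $k$, and a numerical check handles small $k$. Theorem~\ref{richert} then delivers $\sW(\sA,\sP,u,v,\lambda)\gg X/(\log X)^2$.

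The conclusion runs identically to the end of the proof of Theorem~\ref{main2}: any $p\leq X$ contributing positively to $\sW$ has no prime divisor of $N_p(f)$ below $X^{1/v}$, and every prime divisor above $X^{1/u}$ contributes a non-positive weight, whence combining the resulting inequality with Deligne's bound $N_p(f)\leq p^{k-1}+1+2p^{(k-1)/2}$ yields
\[
\omega(N_p(f))\;\leq\; u\cdot\frac{\log N_p(f)}{\log X}+\frac{1}{\lambda}\;\leq\; u(k-1)+\frac{1}{\lambda}+\varepsilon\;=\;8k+1+\sqrt{\log k}+\varepsilon,
\]
which rounds down to $[8k+1+\sqrt{\log k}]$ for $\varepsilon$ small and $X$ large. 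The main obstacle will be pinning down the exponent $3$ in the above bound for $|R_d|$, i.e.\ showing carefully that the $\log|d_{K_{f,d}}|$ term in the GRH-only Lagarias-Odlyzko estimate inflates the AHC exponent $3/2$ by exactly one additional factor of $d^{3/2}$; once that is in hand, the remainder is a mechanical re-parametrization of the proof of Theorem~\ref{main2}.
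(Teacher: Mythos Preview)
Your proposal is correct and follows essentially the same approach as the paper: the only change from Theorem~\ref{main2} is replacing the GRH+AHC Chebotarev error $|R_d|\ll d^{3/2}X^{1/2}\log(dX)$ by the GRH-only bound $|R_d|\ll d^{3}X^{1/2}\log(dX)$ from \cite{murty88}, which forces $\alpha<1/8$ in place of $\alpha<1/5$, and then re-selecting the sieve parameters exactly as you do. The paper likewise checks numerically that the resulting $F(\alpha,v,u,\lambda)$ is positive for $k\geq 4$ and concludes mutatis mutandis.
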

\begin{corollary}\label{ramcor2grh}
Assume GRH  for Artin $L$-functions. Let
$\Delta(q)=\sum_{n=1}^\infty\tau(n)q^n$ be the Ramanujan cusp form
on $\Sl_2(\Z)$ of weight $12$. Let $X$ be sufficiently large. Then
there exists a positive constant $c_1(\Delta)$ depending on $\Delta$
such
 that for $X\gg0$ one has
$$\#\left\{p\leq X: \omega(p^{11}+1-\tau(p))\leq 98\right\}\geq c_1(\Delta)\frac{X}{\log(X)^2}.$$
\end{corollary}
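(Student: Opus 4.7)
The plan is to derive this corollary as an immediate specialization of Theorem~\ref{main2grh} to the Ramanujan cusp form $\Delta$. First I would verify that $\Delta$ satisfies hypothesis~\ref{form}: it is a newform of weight $k=12\geq 4$ on $\Gamma_0(1)=\Sl_2(\Z)$ (which is level $M=1$, in particular square-free so the non-CM condition is automatic), the Nebentypus is trivially $\chi_0$, and its Fourier coefficients $\tau(n)$ are integers. Hence the hypotheses of Theorem~\ref{main2grh} are all met for $f=\Delta$.

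Next I would record that, since $\chi_0(p)=1$ for every prime $p$, the arithmetic quantity specializes as
\begin{equation*}
N_p(\Delta)=\chi_0(p)\,p^{k-1}+1-a_p(\Delta)=p^{11}+1-\tau(p).
\end{equation*}
Applying Theorem~\ref{main2grh} with $k=12$ then produces a constant $c_1(\Delta)>0$ and a bound
\begin{equation*}
\#\{p\leq X:\omega(N_p(\Delta))\leq [8k+1+\sqrt{\log(k)}]\}\geq c_1(\Delta)\frac{X}{\log(X)^2}
\end{equation*}
for $X\gg 0$.

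Finally I would carry out the numerical computation that turns the general bracket $[8k+1+\sqrt{\log(k)}]$ into the concrete constant $98$: with $k=12$ this is $[97+\sqrt{\log(12)}]=[97+1.576\cdots]=[98.576\cdots]=98$, matching the constant stated in the corollary. Since no step requires any input beyond Theorem~\ref{main2grh} and this elementary verification, there is no real obstacle to overcome; the only thing one must be careful about is noting that Theorem~\ref{main2grh} is stated with the bracket function applied to $8k+1+\sqrt{\log(k)}$, so the rounded value $98$ appearing in the corollary is exactly the value produced by this formula at $k=12$.
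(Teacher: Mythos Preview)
Your proposal is correct and matches the paper's own argument: the paper likewise notes that $\Delta$ satisfies hypothesis~\ref{form}, applies Theorem~\ref{main2grh} with $k=12$, and computes $[97+\sqrt{\log(12)}]=98$. The paper additionally remarks that the Ramanujan congruences give the lower bound $\omega(N_p(\Delta))\geq 3$, but this is not part of the stated corollary and your derivation of the corollary itself is complete.
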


\begin{proof}
The proofs of Theorem~\ref{main2grh} and Corollary~\ref{main2grh}
are similar to the proofs of Theorem~\ref{main2} and
Corollary~\ref{ramcor}. So we will indicate the changes required in
the argument and the choice of the sieving parameters we make which
allows us to arrive at the stated results. The change in our
hypothesis from Artin Holomorphy and GRH to GRH alone changes the
error term in the Chebotarev density theorem (see \cite{murty88}).
The change affects Lemma~\ref{main2lemma} which we replace by the
following Lemma~\ref{main2lemma-grh} given below.
\end{proof}
\begin{lemma}\label{main2lemma-grh}
Let $f$ be a newform satisfying our hypothesis~\ref{form}. Let
$$R_d=\pi_f(X,d)-\delta(d)Li(X).$$ Assume GRH  for all Artin
$L$-functions. Then hypothesis $(R(1,\alpha))$ holds with any
$\alpha<1/8$, i.e., we have for any $\alpha<1/8$:
$$\sum_{d<X^\alpha/(\log(X))^B}\mu(d)^23^{\omega(d)}\abs{R_d}\ll
\frac{X}{(\log{X})^2}$$
\end{lemma}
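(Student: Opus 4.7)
The plan is to repeat the argument of Lemma~\ref{main2lemma} almost verbatim, replacing the stronger Chebotarev input by the weaker one available on GRH alone. Recall that the proof of Lemma~\ref{main2lemma} used the effective Chebotarev density theorem only through the estimate $|R_d|\ll d^{3/2}X^{1/2}\log(dX)$, and that this shape of the error relied crucially on Artin's Holomorphy Conjecture: it is Artin Holomorphy that lets one decompose $\zeta_{K_{f,d}}$ into genuine (holomorphic) Artin $L$-functions, and so replace the $n_{K_{f,d}}\log X$ factor in the Lagarias--Odlyzko bound by something much smaller.

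Without Artin Holomorphy but still on GRH, the version of Chebotarev in \cite{murty88} gives an error of the shape
$$|R_d|\ll \frac{|C_{d,1}|}{|G_d|}\, X^{1/2}\bigl(\log d_{K_{f,d}}+n_{K_{f,d}}\log X\bigr).$$
From Lemma~\ref{cardinalitylemma} we have $|C_{d,1}|/|G_d|\ll 1/d$, from the image-of-Galois description we have $n_{K_{f,d}}=[K_{f,d}:\Q]\ll d^{4}$, and the conductor--discriminant formula bounds $\log d_{K_{f,d}}\ll n_{K_{f,d}}\log(dM)\ll d^{4}\log(dM)$. Substituting, this simplifies to
$$|R_d|\ll d^{3}X^{1/2}\log(dX).$$

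With this weaker input in hand, the proof now proceeds exactly as in Lemma~\ref{main2lemma}. Using the crude bound $3^{\omega(d)}\ll d^{\varepsilon}$, the quantity to be controlled is
$$\sum_{d<X^{\alpha}/(\log X)^{B}}\mu(d)^{2}3^{\omega(d)}|R_d|\ll X^{1/2}\log(X)\sum_{d<X^{\alpha}/(\log X)^{B}}d^{3+\varepsilon},$$
and partial summation turns the right-hand side into $\ll X^{1/2+4\alpha+\varepsilon}$. Demanding this to be $\ll X/(\log X)^{2}$ forces $1/2+4\alpha<1$, i.e.\ $\alpha<1/8$, which is precisely the range claimed.

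The only genuinely non-mechanical step is the bookkeeping of the degree and discriminant of $K_{f,d}/\Q$ appearing in the Chebotarev error term; this is where the exponent worsens from $3/2$ (on GRH + Artin Holomorphy) to $3$ (on GRH alone), and correspondingly the admissible $\alpha$ shrinks from $1/5$ to $1/8$. Once this is in place, the sieve parameters in Theorem~\ref{main2grh} are obtained by rerunning the Richert sieve with $\alpha=(k-1)/(8k)$ in place of $\alpha=(k-1)/(5k)$, which accounts for the jump from the constant $5k$ to $8k$ in the final bound.
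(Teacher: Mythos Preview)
Your proposal is correct and follows the paper's own proof essentially line for line: the paper also invokes \cite{murty88} on GRH to get $|R_d|=O(d^{3}X^{1/2}\log(dX))$, bounds $3^{\omega(d)}\ll d^{\varepsilon}$, sums to obtain $X^{1/2+4\alpha+\varepsilon}$ by partial summation, and concludes $\alpha<1/8$. If anything, you supply more detail than the paper does, since the paper simply asserts the $d^{3}$ error from \cite{murty88} without unpacking the degree and discriminant bookkeeping that you spell out.
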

\begin{proof}
Observe that we have from \cite[page 260]{hardy-wright}, that
$$3^{\omega(n)}\leq d(n)^{3\log 3/\log 2} \ll n^\varepsilon.$$
Thus we have
$$\sum_{d<X^\alpha/(\log X)^B}\mu^2(d)3^{\omega(d)}\abs{R_d}\ll
\sum_{d<X^\alpha/(\log X)^B}d^\varepsilon\abs{R_d}.$$ Assuming GRH
by \cite{murty88} we have
$$\abs{R_d}=O\left(d^{3}X^{1/2}\log(dX)\right).$$
So the sum in question is certainly
$$\ll\sum_{d<X^\alpha/(\log(X))^B}d^{3+\varepsilon}X^{1/2}\log(dX),$$
which is
$$\ll X^{1/2}\log(X)\sum_{d<X^\alpha/(\log
X)^B}d^{3+\varepsilon},$$ and this is, by partial summation,
$$\ll X^{1/2+4\alpha+\epsilon}.$$
For $\alpha<1/8$, the sum in the assertion is
$$\ll X^{1/4+\varepsilon}$$
and clearly this is certainly $\ll \frac{X}{(\log X)^2}$. This
proves the assertion.
\end{proof}

Thus we can apply Theorem~\ref{richert} to our situation and we will make this explicit now. To
apply Theorem~\ref{richert} we need to choose $\alpha,u,v,\lambda$ satisfying conditions of the
theorem.

We choose as follows. We will take $k\geq 4$ and:
\begin{eqnarray}
\alpha&=&\frac{1}{8}-\frac{1}{8k}=\frac{k-1}{8k},\\
u&=&\frac{8k}{k-1}+\frac{1}{k-1}=\frac{8k+1}{k-1}\\
v&=&\frac{4}{\alpha}=\frac{32k}{k-1},\\
\lambda&=&\frac{1}{\sqrt{\log(k)}}.
\end{eqnarray}
Then we have
$$\frac{1}{\alpha}=\frac{8k}{k-1}<u=\frac{8k+1}{k-1}<v=\frac{4}{\alpha}=\frac{32k}{k-1}.$$
With these choices, we define
$$G_3(k)=F\left(\frac{k-1}{8k},\frac{32k}{k-1},\frac{8k+1}{k-1},\frac{1}{\sqrt{\log(k)}}\right)$$
explicitly this is given by
\begin{equation}
G_3(k)=\frac{e^{\gamma}}{32 k \sqrt{\text{log}(k)}} \left(8\text{log}(3) k  \sqrt{\text{log}(k)}+\text{log}(24 k)-(1+8 k) \text{log}\left(\frac{32 k}{1+8 k}\right)\right)
\end{equation}
and then it is clear that $G_3(k)>0$ for $k\gg0$ and numerically one
checks that for $k\geq 3.609\cdots$, we have $G_3(k)>0$. Thus we
have that $F(\alpha,v,u,\lambda)>0$ for these choices of the
parameters. So we can apply Theorem~\ref{richert}, and note that by
the prime number theorem  (or by Chebyshev's Theorem) we have
$\#\sA\gg\frac{X}{\log X}$ and so we  deduce that
$$\sW(\sA,\sP,u,v,\lambda)\gg
\frac{X}{(\log X)^2}.$$

Now the rest of the proof proceeds mutatis mutandis along the lines of Theorem~\ref{main2}.

\subsection{} Now we can prove the corollary. For the Ramanujan modular form
$$\Delta(q)=\sum_{n=1}^\infty\tau(n)q^n$$
we have $k=12$ and hence we deduce assuming  GRH that, there exists
infinitely many prime $p$ such that
$$3\leq \omega(p^{11}+1-\tau(p)) \leq [97+\sqrt{\log(12)}]=98.$$
As indicated in the introduction, the bound $\omega(N_p(\Delta))\geq
3$ is a consequence of the Ramanujan congruences for $\Delta(q)$.
This completes the proof of Theorem~\ref{ramcor2}.

\section{The normal order of $\omega(N_p(f))$}.
\subsection{} We show in this section that the average behavior of
$\omega(N_p(f))$ is similar to the average behavior of $\omega(n)$.
More precisely we have the following theorem.
\begin{theorem}\label{erdoskac} Let $f$ be a newform satisfying hypothesis of \ref{form}.
Assume GRH. Then we have
\begin{equation} \lim_{X\to\infty}\frac{1}{\pi(X)}\#\left\{p\leq X :
\frac{\omega(N_p(f))-\log\log(p)}{\sqrt{\log\log(p)}}\leq \gamma
\right\}=G(\gamma), \end{equation} where
$$G(\gamma)=\frac{1}{\sqrt{2\pi}}\int_{-\infty}^\gamma
e^{-\frac{t^2}{2}} dt.$$ \end{theorem}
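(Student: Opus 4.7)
The approach is the method of moments, modeled on the classical proof of the Erdős--Kac theorem with $\omega(n)$ replaced by $\omega(N_p(f))$. The two inputs specific to our setting are Lemma~\ref{cardinalitylemma}, which gives the density $\delta(\ell)=1/\ell+O(1/\ell^3)$ of primes $p$ with $\ell\mid N_p(f)$, and the GRH Chebotarev bound of Lemma~\ref{main2lemma-grh}, controlling the remainder $|R_d|\ll d^{3}X^{1/2}\log(dX)$ for squarefree $d$.

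First, I would truncate. Choose a small constant $\eta=\eta(k)>0$ and set $y=X^\eta$. By Deligne's bound, $N_p(f)\leq 2p^{k-1}\leq 2X^{k-1}$, so any prime divisor of $N_p(f)$ exceeding $y$ contributes at most $(k-1)/\eta+O(1)=O_k(1)$ terms; hence $\omega(N_p(f))=\omega_y(N_p(f))+O_k(1)$, and on the scale $\sqrt{\log\log p}$ this tail is negligible. By Mertens' theorem and Lemma~\ref{cardinalitylemma}, $\sum_{\ell\leq y}\delta(\ell)=\log\log X+O(1)$, which differs from $\log\log p$ by $o(\sqrt{\log\log p})$ for all but $o(\pi(X))$ primes $p\leq X$. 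It therefore suffices to show that $S_p/\sqrt{\log\log X}$, where
$$S_p:=\sum_{\ell\leq y}\bigl(\mathbf{1}_{\ell\mid N_p(f)}-\delta(\ell)\bigr),$$
tends in distribution to a standard Gaussian as $p$ ranges over primes $\leq X$.

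Next, I would compute the $r$-th moment
$$\frac{1}{\pi(X)}\sum_{p\leq X}S_p^{\,r}=\sum_{\ell_1,\ldots,\ell_r\leq y}\frac{1}{\pi(X)}\sum_{p\leq X}\prod_{i=1}^{r}\bigl(\mathbf{1}_{\ell_i\mid N_p(f)}-\delta(\ell_i)\bigr),$$
evaluating the inner average by Chebotarev on the compositum of the fields $K_{f,\ell_i}$. By Ribet's independence theorem (a quantitative refinement of the large-image result recalled in Section~3), for each fixed $f$ there is a finite set $S_f$ of primes such that $K_{f,\ell_1}\cdots K_{f,\ell_s}$ has Galois group $\prod_{i}G_{\ell_i}$ whenever the $\ell_i\notin S_f$ are distinct; tuples meeting $S_f$ contribute negligibly. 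Choosing $\eta$ so that $r\eta<1/8-\varepsilon$ makes the GRH error $|R_d|$ negligible uniformly over squarefree kernels $d\leq y^{r}$. The surviving main term then matches, term by term, that for a sum of independent Bernoulli variables $\mathrm{Ber}(\delta(\ell))$, and the classical combinatorial identity underlying the Erdős--Kac proof yields the Gaussian moment $(1+o(1))\mu_r(\log\log X)^{r/2}$, where $\mu_r$ is the $r$-th moment of the standard normal.

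Finally I would invoke the Fréchet--Shohat theorem to promote convergence of all moments to convergence in distribution, obtaining $G(\gamma)$ as the limiting law. The main obstacle is to let $r\to\infty$: one must allow $\eta$ to shrink with $r$ (while preserving the earlier tail estimate), and keep careful bookkeeping of the implicit constants in Lemma~\ref{main2lemma-grh} when $d$ has many small prime factors, so that the Chebotarev error remains $o(1)$ uniformly across all tuples entering the $r$-th moment. A secondary subtlety is confirming that the exceptional set $S_f$ coming from Ribet's theorem is genuinely independent of $r$, so that the compositum has the expected Galois group for all but finitely many $\ell$ regardless of how many factors appear.
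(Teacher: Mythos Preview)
Your moment-method argument is sound and is the classical route to Erd\H{o}s--Kac theorems. The paper, however, does not carry out the moment computation itself: it invokes a general criterion of Liu \cite[Theorem~3]{liu06}, which packages the whole argument into seven axioms (labelled (C) and (1)--(6) there) on the sequence $p\mapsto N_p(f)$. The paper then just checks these axioms: (C) is trivial; (2), (4), (5) follow from the density estimate $\delta(\ell)=1/\ell+O(1/\ell^{3})$ of Lemma~\ref{cardinalitylemma} together with Mertens; and (3), (6) follow from the GRH Chebotarev bound of \cite{murty88}, with error parameter $e_\ell=X^{1/2}\ell^{3}\log(\ell^{4}X)/\pi(X)$ and any exponent $\beta<1/10$.

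So the difference is one of packaging rather than substance. Your approach is self-contained and makes visible the role of Ribet's independence of the mod-$\ell$ representations for distinct $\ell$ (which the paper uses implicitly, here and already in the sieve with composite moduli $d$ in Section~4). The paper's approach is much shorter but depends on an external black box. Two minor comments on your write-up: the ``main obstacle'' you flag is not one, since Fr\'echet--Shohat only asks that each individual moment converge as $X\to\infty$, so for each fixed $r$ you may simply fix $\eta=\eta(r)$ once and for all (the truncation error is then $O_{k,r}(1)$, negligible after dividing by $\sqrt{\log\log X}$); and the Chebotarev remainder $|R_d|\ll d^{3}X^{1/2}\log(dX)$ depends only on the size of $d$, not on $\omega(d)$, so no extra bookkeeping for moduli with many small prime factors is required.
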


\subsection{} This will be proved by the method of proof of
\cite[Theorem 1, page 156]{liu06}. The proof of
Theorem~\ref{erdoskac} is very similar to that of \cite[Theorem 1,
page 156]{liu06}. So we will only provide a brief sketch here. In
\cite[Theorem~3, page 160]{liu06} a general criterion is given for
an arithmetic function to satisfy the Erd\"os-Kac theorem and this
general result is applied to prove \cite[Theorem 1, page
156]{liu06}. We show here that these criteria are also valid (under
GRH) in the present case.  We show that Theorem~\ref{erdoskac} is a
consequence of the results of the previous sections and
\cite[Theorem~3, page 160]{liu06}. In this section we will use
notations from \cite{liu06}. To apply \cite[Theorem~3, page
160]{liu06} we have to verify the seven conditions (C) and (1)-(6)
of loc.cit. are satisfied. In the interest of brevity we will not
recall these here. But we will adhere to the notations of
\cite{liu06}. We take the set $S=\left\{p\leq X: \text{$p$ a
prime}\right\}$ and a function $N:S\to \N$ given by $p\mapsto
N_p(f)$. Thus condition (C) is obviously satisfied and we note that
by Lemma~\ref{cardinalitylemma}, the conditions (2), (4) and (5)
hold (by GRH and Mertens's Theorem). Thus we have to verify
condition (3) and (6). This needs the Chebotarev Theorem of
\cite{murty88}. In the notation of \cite{liu06} we have
$$e_\ell=\frac{X^{1/2}\ell^{3}\log(\ell^4X)}{\pi(X)},$$ and so
conditions (3) and (6) are verified exactly as in \cite{liu06} with
$\beta<\frac{1}{10}$. This completes our sketch of
Theorem~\ref{erdoskac}.

\section{Examples}\label{examples} We list examples of new forms of
low level where the theorems apply. We summarize the data in a
convenient form. Each row corresponds to a level. Each column
corresponds to a weight. The $(n,k)$-th entry lists the number of
distinct  newforms satisfying hypothesis of \ref{form}. The ninety
five forms in this list are all non-CM. The data provided here was
extracted from the Modular Forms Database \cite{mfd} and also using
Sage and PARI/GP software packages \cite{sage,PARI2}. We note that
the list is not exhaustive by any means. For instance, one can find
forms of weights $48$ on $\Gamma_0(2)$ and of weight $44$ on
$\Gamma_0(6)$ which satisfy hypothesis \ref{form} (see
\cite{tablesofforms}).

The empty spaces in the list are where our version of Sage failed to
provide conclusive answers or the database did not return values.
\begin{center}
\begin{tabular}{|c|c|c|c|c|c|c|c|c|c|c|c|c|c|c|}
\hline $N\downarrow,k\rightarrow$&4&6&8&10&12&14&16&18&20&22&24&26\\
\hline
1&0&0&0&0&1&0&1&1&1&1&0&1\\
\hline
2&0&0&1&1&0&2&1&1&2&2&0&1\\
\hline
3&0&1&1&2&1&&&&&&&\\
\hline
4&&&&&&&&&&&&\\
\hline
5&1&1&1&1&&&&&&&&\\
\hline
6&1&1&1&1&3&1&3&&&&&\\
\hline
7&1&1&1&&&&&&&&&\\
\hline 8&&&&&&&&&&&&\\ \hline 9&0&&&&&&&&&&&\\ \hline
10&1&3&1&3&3&&&&&&&\\ \hline 11&0&1&0&&&&&&&&&\\ \hline
12&&&&&&&&&&&&\\ \hline
13&1&0&1&&&&&&&&&\\
\hline
14&2&2&2&2&2&&&&&&&\\
\hline 15&2&2&2&2&1&&&&&&&\\ \hline
16&&&&&&&&&&&&\\ \hline 17&1&2&1&&&&&&&&&\\ \hline 18&&&&&&&&&&&&\\
\hline 19&1&2&&&&&&&&&&\\ \hline 20&&&&&&&&&&&&\\ \hline
21&2&4&1&1&2&&&&&&&\\
              \hline
            \end{tabular}
\end{center}

\section{Remarks and refinements}
\subsection{Refined version of Koblitz' conjecture}\label{refined-koblitz}
Let $f$ be a newform as in \ref{form}. Let $\ell$ be a prime and
suppose that for all but finite number of primes $p$ we have
$$\chi(p)p^{k-1}-a_p(f)+1\congruent0\mod{\ell},$$ then we say that
$\ell$ is an \emph{almost Eisenstein prime for $f$}. Suppose
$\nu(f)$ is the number of distinct almost Eisenstein primes for $f$.
The number of such primes is finite by the Theorem~\cite{ribet85}
(this is a subset of primes for which the mod $\ell$ Galois
representation associated to $f$ is reducible). Observe that
$\omega(N_p(f))\geq \nu(f)$ for all but finitely many $p$. Thus we
are led to the following reformulation of Koblitz's conjecture:
\begin{conj}\label{generalized-koblitz1} Let $f$ be a modular form as in \ref{form}. Let
$\nu(f)$ denote number of almost Eisenstein primes for $f$. Then for
$X\gg 0$ the number of primes $p\leq X$ such that
$\omega(N_p(f))=\nu(f)+1$ is at least $\gg\frac{X}{\log(X)^2}$.
\end{conj}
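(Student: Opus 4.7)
The plan is to reduce the conjecture to a near-prime sieve problem for the ``non-Eisenstein part'' of $N_p(f)$. Let $\ell_1,\ldots,\ell_r$, $r=\nu(f)$, be the almost Eisenstein primes for $f$ and write
$$N_p(f)=\prod_{i=1}^{r}\ell_i^{e_i(p)}\cdot m_p,\qquad (m_p,\ell_1\cdots\ell_r)=1.$$
By definition of almost Eisenstein prime, $e_i(p)\geq 1$ for all but finitely many $p$, so $\omega(N_p(f))=\nu(f)+\omega(m_p)$ and the conjecture becomes the statement that $\omega(m_p)=1$ (i.e., $m_p$ is a prime power) for $\gg X/\log(X)^2$ primes $p\leq X$. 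A preliminary step is to check that the sum $\sum_{i}e_i(p)\log\ell_i$ is $O(1)$ on average (the higher $\ell_i$-adic valuations should obey a geometric-type distribution controlled by Chebotarev on $K_{f,\ell_i^n}$), so that $\log m_p=(k-1)\log p+O(1)$ for a positive proportion of $p$; we are then genuinely sieving an integer of size $\asymp p^{k-1}$.

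Next I would re-run the Richert weighted sieve of Theorem~\ref{richert} on $\sA=\{m_p:p\leq X\}$ with $\sP$ equal to the primes not among the $\ell_i$. Because $\ell\in\sP$ is non-Eisenstein, for $\ell\gg 0$ the image of $\brho_{f,\ell}$ is still $G_\ell$, so Lemma~\ref{cardinalitylemma} delivers the same local density $\delta(\ell)=1/\ell+O(1/\ell^3)$; hypotheses $(\Omega_1)$ and $(\Omega_2(1,L))$ go through verbatim, and under GRH and Artin Holomorphy, Lemma~\ref{main2lemma} supplies $(R(1,\alpha))$ for any $\alpha<1/5$. Applying Theorem~\ref{richert} with the parameters of Theorem~\ref{main2} yields a lower bound of the desired shape $\gg X/\log(X)^2$ for primes $p$ with $\omega(m_p)\leq r_0(k)$ for some explicit $r_0(k)$.

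To sharpen ``$\omega(m_p)\leq r_0(k)$'' down to the equality ``$\omega(m_p)=1$'' I would combine this lower bound with a Selberg upper bound of the type of Theorem~\ref{upperbound2}, capping the number of $p\leq X$ for which $m_p$ has two ``medium'' prime divisors $X^{1/v}<\ell_1,\ell_2<X^{1/u}$; subtracting this from the Richert lower bound in a Chen-style switching argument would, if the bookkeeping allows, isolate $p$ with $\omega(m_p)=1$. Optimistically, one would want to push the level of distribution $\alpha$ close to $1/2$ and exploit ``well-factorable'' remainders in the spirit of Iwaniec's work on $n^2+1$; pure Richert-type sifting stops at $\omega(m_p)\leq 2$.

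The hard part will be the parity obstruction. Any sieve in the Selberg--Rosser--Richert family is blind to the parity of $\Omega(\cdot)$, so the above strategy cannot on its own separate $\omega(m_p)=1$ from $\omega(m_p)=2$; this is exactly the barrier that keeps Koblitz's original conjecture (the $k=2$ case) open. Breaking it in the present setup would require a bilinear (Type-II) cancellation input, plausibly extracted from the family $\brho_{f,\ell^n}$ in the style of Friedlander--Iwaniec, or an altogether new idea. Absent such an input, the proposal can only reduce Conjecture~\ref{generalized-koblitz1} to a parity-breaking statement of this strength, leaving the conjecture itself open.
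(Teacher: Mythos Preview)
The statement \ref{generalized-koblitz1} is a \emph{conjecture} in the paper; the paper offers no proof, only the motivating discussion and a table of numerical evidence for $f=\Delta$. There is therefore no proof to compare your attempt against, and your own conclusion---that the assertion remains open, blocked ultimately by a parity-type obstruction---agrees with the paper's stance.

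Your reduction (strip off the $\nu(f)$ almost Eisenstein primes and sieve the cofactor $m_p$) is the natural one, and the verification that $(\Omega_1)$, $(\Omega_2(1,L))$ and $(R(1,\alpha))$ carry over to the sequence $\{m_p\}$ is unproblematic. One quantitative point should be corrected, however. You write that with level of distribution $\alpha$ near $1/2$ and well-factorable remainders ``pure Richert-type sifting stops at $\omega(m_p)\leq 2$'', by analogy with $p+2$ or $n^2+1$. That analogy does not survive here: $m_p\asymp p^{k-1}$ with $k\geq 4$, so the elements being sieved have size $X^{k-1}$ against a sequence of only $\sim X/\log X$ terms. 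Even with an idealised level $\alpha$ close to $1$ and a Chen-style switching step, the best one can expect is $\Omega(m_p)\leq (k-1)+O(1)$, in line with what is known for values of a fixed irreducible polynomial of degree $k-1$ at primes (cf.\ \cite{halberstam-sieve}). Hence the gap between any conceivable linear-sieve output and the target $\omega(m_p)=1$ is not merely the parity barrier but grows linearly with $k$; for $k\geq 4$ the conjecture is strictly farther out of reach than Koblitz' original question for elliptic curves. Your honest final sentence is thus correct, but the obstruction is more severe than you indicate.
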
 For example, for $\Delta$, we have $\nu(\Delta)=3$ and so
the conjecture predicts that there are infinitely many primes $p$
where $N_p(\Delta)$ has exactly four distinct prime factors. Here
are all the primes $p\leq 16000$ with $\omega(N_p(\Delta))=4$.

\begin{center}
\begin{tabular}{|c|l|}
  \hline
  % after \\: \hline or \cline{col1-col2} \cline{col3-col4} ...
  $p$ & $N_p(\Delta)$  \\
  \hline
  $5$ & $2^{10}\cdot3\cdot23\cdot691$  \\
  \hline
  $7$ & $2^9\cdot3^5\cdot23\cdot691$\\
  \hline
  $577$& $2^{13}\cdot3^7\cdot691\cdot190641378938814930857$\\
  \hline
  $1153$ & $2^{13}\cdot3^6\cdot691\cdot1160183970784175844330767$\\
  \hline
  $1297$ & $2^{11}\cdot3^6\cdot691\cdot16935741217449799251621239$\\
  \hline
  $3803$ &
  $2^8\cdot3\cdot691\cdot4534718285139898177401117938327717$\\
  \hline
  $5693$ &
  $2^{10}\cdot3\cdot691\cdot95907763393686429420185450510493683$\\
  \hline
  $11317$ &
  $2^{10}\cdot3^5\cdot691\cdot2268089547548261526855554962441076239$\\
  \hline
  $14437$ & $2^{10}\cdot3^6\cdot691\cdot11008825527208610156044088966777471773$\\
  \hline
  $15307$ &
  $2^8\cdot3\cdot5\cdot691\cdot251458672161512059369128893956312797721$\\
  \hline
\end{tabular}
\end{center}

We remark that Conjecture~\ref{generalized-koblitz1} includes the
following version of Koblitz's conjecture  for elliptic curves. The
original version of Koblitz' conjecture is made for elliptic curves
which are not $\Q$-isogenous to an elliptic curve with torsion.
\begin{conj}\label{generalized-koblitz2} Let $E/\Q$ be an elliptic curve. Let
$t_E=\#E(\Q)_{tor}$. Then for $X\gg 0$ the number of primes $p\leq
X$ such that $\omega(N_p(E))=\nu(t_E)+1$ is at least
$\gg\frac{X}{\log(X)^2}$.
\end{conj}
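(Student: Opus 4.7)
The plan is to adapt the weighted-sieve machinery of Theorem~\ref{main2} to the elliptic curve setting, which is precisely the $k=2$ case excluded from the paper's main theorem, while tracking the contribution of the rational torsion.

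First I would pin down the forced prime divisors of $N_p(E)$. Since the reduction map $E(\Q)_{\tors}\hookrightarrow E(\F_p)$ is injective at all primes $p$ of good reduction not dividing $t_E$, the torsion order $t_E$ divides $N_p(E)$ for all but finitely many $p$, so $\omega(N_p(E))\geq\omega(t_E)=\nu(t_E)$. The conjecture therefore reduces to producing $\gg X/\log(X)^2$ primes $p\leq X$ for which the complementary quotient $M_p:=N_p(E)/\prod_{\ell\mid t_E}\ell^{v_\ell(N_p(E))}$ is a prime power of a single prime coprime to $t_E$.

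Next I would run Richert's weighted sieve (Theorem~\ref{richert}) on $\sA=\{M_p : p\leq X,\,p\nmid N_E t_E\}$ with $\sP=\{\ell\text{ prime}:\ell\nmid t_E\}$. Chebotarev applied to the division fields $\Q(E[\ell])$, under GRH and Artin's Holomorphy, gives the density $\delta(\ell)=\#C_{\ell,1}/\#\Gl_2(\F_\ell)=1/\ell+O(1/\ell^3)$ and the error estimate of Lemma~\ref{main2lemma}, so hypotheses $(\Omega_1)$, $(\Omega_2(1,L))$, and $R(1,\alpha)$ for any $\alpha<1/5$ carry over verbatim, yielding $\sW(\sA,\sP,u,v,\lambda)\gg X/\log(X)^2$ whenever $F(\alpha,v,u,\lambda)>0$.

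The main obstacle --- and the reason this must remain a conjecture --- is the smallness of $k-1=1$. In the paper's endgame one derives
$$\omega(N_p(f))\leq u\,\frac{\log N_p(f)}{\log X}+\frac{1}{\lambda},$$
and the Hasse bound $N_p(E)\leq p+1+2\sqrt{p}$ forces $\log N_p(E)/\log X\to 1$, leaving only $\omega(N_p(E))\leq u+1/\lambda$. With the admissible range $u>1/\alpha>5$, this falls far short of the sharp target $\omega(t_E)+1\leq 4$ furnished by Mazur's classification. Closing the gap would require level of distribution $\alpha$ arbitrarily close to $1$, which is exactly the Bombieri--Vinogradov-type barrier that obstructs Koblitz's original conjecture. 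Accordingly, the sieve approach sketched above can at best yield a bound of the form $\omega(N_p(E))\leq C$ for some absolute $C$ on a set of size $\gg X/\log(X)^2$, in line with \cite{miri01,murty04,cojocaru05,steuding05,steuding05e}; the sharp lower bound predicted by the conjecture matches the Bateman--Horn heuristic for the sequence $M_p$, but a rigorous matching lower bound lies beyond current sieve technology.
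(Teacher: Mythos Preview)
This statement is a \emph{conjecture}, and the paper makes no attempt to prove it. The paragraph that follows Conjecture~\ref{generalized-koblitz2} in the paper is not a proof but a justification that the conjecture specializes Conjecture~\ref{generalized-koblitz1}: it simply observes that any prime $q\mid t_E$ divides $N_p(E)$ at every prime $p$ of good reduction (via the injection $E(\Q)_{\tors}\hookrightarrow E(\F_p)$), so each such $q$ is an almost Eisenstein prime for $E$ and the target $\omega(t_E)+1$ is the correct analogue of $\nu(f)+1$. Your first paragraph reproduces exactly this observation.

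Your subsequent sieve discussion goes well beyond anything the paper offers, and as a heuristic analysis it is essentially correct. You have accurately diagnosed why Richert's sieve cannot reach the conjectured bound: with $k=2$ the Hasse estimate forces $\log N_p(E)/\log X\to 1$, so the sieve yields at best $\omega(N_p(E))\leq u+1/\lambda$ with $u>1/\alpha>5$, whereas the target is $\omega(t_E)+1\leq 3$ (not $4$: by Mazur's theorem $t_E\leq 16$, so $\omega(t_E)\leq 2$). Your conclusion that the sharp lower bound ``lies beyond current sieve technology'' is in complete agreement with the paper, which states the assertion as a conjecture and proves nothing about it. There is therefore no gap to flag: neither you nor the author claims a proof.
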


Indeed, if $q$ is a prime dividing the torsion subgroup of $E(\Q)$,
then it is well-known, that for primes $p$, of good reduction for
$E$, we have $N_p(E)=p+1-a_p(E)\congruent0\mod{q}$ (because torsion
injects into the $\F_p$-rational points). Hence $\nu(N_p(E))\geq
\omega(t_E)$ and any prime $q$ dividing $t_E$ is an almost
Eisenstein prime for $E$.

\subsection{Ramanujan-Serre forms}\label{ramanujan-serre-forms}
In the introduction we indicated that the  obvious variant Koblitz's
question is false for $\Delta(q)$. Here we show that there exists
infinitely many forms of weights $k_i$ tending to infinity (as
$i\to\infty$) such that for all primes $p$, we have
$\omega(N_p(f_{k_i}))\geq 2$. The construction depends on what we
should call \emph{weak Ramanujan-Serre forms}. A \emph{weak
Ramanujan-Serre} form $f$ for $\Sl_2(\Z)$ is a normalized cusp (but
not necessarily an eigen) form of weight $k$ on $\Sl_2(\Z)$ with
coefficients in $\Z[1/N]$ for some $N\geq 1$ and which is congruent
to the Eisenstein series of weight $k$ for every prime $q$ which
divides the numerator of $B_{k}/k$.  We say that a Hecke eigen form
of weight $k$ for $\Sl_2(\Z)$  is a \emph{strong Ramanujan-Serre}
form if it is congruent to some weak Ramanujan-Serre form for every
prime dividing $B_k/k$. The Ramanujan-Delta function is an example
of this; so are the cusp forms of weights less than 24. The
existence of such forms for low weights (the famous example being
$\Delta(q)$) is due to Ramanujan. The existence of weak
Ramanujan-Serre forms for all weights $k\geq 12$ is due to Serre
(see \cite{ribet76,khare00}), so the name weak Ramanujan-Serre forms
is appropriate; weak Ramanujan-Serre forms of weights $\leq 20$ are
also strong Ramanujan forms as there is only one prime dividing the
corresponding $B_k/k$. The unique normalized cusp form
$\Delta_{22}(q)$ of weight 22 is an example of a strong
Ramanujan-Serre form (there are two primes $131$ and $593$ dividing
$B_{22}/22$). Strong Ramannujan-Serre forms do not always exists.
For instance there are no  strong Ramanujan-Serre forms of weight
$k=24$. We do not know if strong Ramanujan-Serre forms always exist
(for all weights). We recall that a conjecture of Honda asserts that
there are no Hecke eigenforms for $\Sl_2(\Z)$ satisfying $k\geq 28$
and with integer Fourier coefficients.

%However we know that by the Deligne-Serre lifting lemma, such a form
%always exists when $B_k/k$ has exactly one prime dividing its
%numerator.

\begin{remark}\label{aremark}
There exits an increasing, infinite sequence of integers $k_i\geq
12$  and weak Ramanujan-Serre forms  $f_{k_i}$ of weights
$k_i$ such that $\omega(N_p(f_{k_i}))\geq 2$. In particular
$N_p(f_{k_i})$ are not primes for all primes $p$.
\end{remark}
\begin{proof}
By a result of Serre \cite{ribet76,khare00}, we are assured of the
existence of weak Ramanujan-Serre forms. In other words there exists
cusp form $f_k(q)=\sum_{n=1}^\infty a_n(f_k)q^n$ of weight $k$ on
$\Sl_2(\Z)$ with coefficients in $\Z[1/N]$ for some $N\geq 1$, such
that for any prime $\ell$ dividing the numerator of $B_k/k$ we have
$$E_k=-B_k/k+\sum_{n=1}^\infty \sigma_{k-1}(n)q^n\congruent
f_k(q)\mod(\ell)$$ and thus we deduce that
$N_p(f_k)=p^{k-1}+1-a_p(f_k)$ has at least $\omega(B_k/k)$ prime
divisors for $p\geq 2$. Thus we see that
$$\omega(B_k/k)\leq \omega(p^{k-1}+1-a_p(f_k)).$$
So it will suffice to prove that there is a sequence of $k$ such that $\omega(B_k/k)\geq 2$.

To prove this we will use an old result of S.~Chowla (see \cite{chowla31}). It was shown in
\cite{chowla31} that if $p$ is an odd prime such that $p$ divides $\frac{B_n}{n}$ and such that
$p$ does not divide $2^n-1$, then $p$ divides the numerator of $B_{n+(p-1)i}$ for all $i\geq 0$.
We choose $n=24$ then $\frac{B_{24}}{24}=\frac{103\cdot2294797}{65520}$, while
$$2^{24}-1\not\congruent0\mod{103},$$ so by Chowla's Theorem, we see that for any $i\geq 0$,
$$\frac{B_{24+102i}}{24+102i}\congruent0\mod{103}.$$
Again for any $j\geq 0$, $691$ divides the numerator of
$\frac{B_{12+690j}}{12+690j}$. Since the arithmetic progressions
$\{12+690j\}_j$ and $\{24+102i\}_{i}$ have infinitely many common
elements (by the Chinese remainder theorem) so that we see that
there exists an infinite number of integers $i\geq 0$ such that
$$\frac{B_{12+690i}}{12+690i}\congruent0\mod{103\cdot691}.$$
Moreover, as $i\to\infty$ the numerator of $\frac{B_{12+690i}}{12+690i}$ goes to infinity as
well and so $\omega(B_{12+690i}/(12+690i))\geq 2$ as $i\to\infty$. Thus we may take
$k_i=12+690i$. Thus our assertion follows.
\end{proof}

\begin{remark}\label{bernoullibound}
The existence of weak Serre-Ramanujan forms $f_k$ as above has a curious consequence. In the
notation of the above proposition we have $\omega(B_k/k)\leq \omega(2^{k-1}+1-a_2(f_k))$. Now
for sufficiently large $n$, $\omega(n)\ll \frac{\log(n)}{\log(\log(n))}$. So we see that
$$\omega(B_k/k)\ll \frac{\log(2^{k-1}+1-a_2(f_k))}{\log\log(2^{k-1}+1-a_2(f_k))}\ll
\frac{k}{\log(k)}.$$ So we have for $k\gg 0$,
$$\omega(B_k/k)\ll \frac{k}{\log(k)}.$$
We do not know a better upper bound for the number of prime factors
of the numerator of $B_k/k$. We note that, as $\frac{B_k}{k}\asymp
k^k$, the normal order heuristics would suggests $\omega(B_k/k)\asymp
\log(k)$. But we do not know if $B_k/k$ satisfies the normal order estimates.
\end{remark}

%\bibliographystyle{amsalpha}
%\bibliography{delta}

\begin{thebibliography}{{PAR}08}

\bibitem[Bre01]{breuil01}
Breuil, Christophe and Conrad, Brian and Diamond, Fred and Taylor, Richard,
\emph{On the modularity of elliptic curves over {$\bold Q$}: wild
$3$-adic exercises}, J. Amer. Math. Soc. \textbf{14} (2001), no.~4, 843--939.

\bibitem[Cho31]{chowla31}
S.~D. Chowla, \emph{On a conjecture of {R}amanujan}, Tohoku Math. J.
  \textbf{33} (1931), 1--2.

\bibitem[CM04]{murty04}
Alina~Carmen Cojocaru and M.~Ram Murty, \emph{Cyclicity of elliptic curves
  modulo {$p$} and elliptic curve analogues of {L}innik's problem}, Math. Ann.
  \textbf{330} (2004), no.~3, 601--625.

\bibitem[Coj05]{cojocaru05}
Alina~Carmen Cojocaru, \emph{Reductions of an elliptic curve with almost prime
  orders}, Acta Arith. \textbf{119} (2005), no.~3, 265--289.

\bibitem[Del69]{deligne68}
P.~Deligne, \emph{Formes modulaires et representations $\ell$-adiques},
  Seminair Bourbaki, 1968-1969, pp.~139--172.

\bibitem[HR74]{halberstam-sieve}
H.~Halberstam and H.-E. Richert, \emph{Sieve methods}, Academic Press, 1974,
  London Mathematical Society Monographs, No. 4.

\bibitem[HW79]{hardy-wright}
G.~H. Hardy and E.~M. Wright, \emph{An introduction to the theory of numbers},
  fifth ed., The Clarendon Press Oxford University Press, New York, 1979.

\bibitem[Kha00]{khare00}
Chandrashekhar Khare, \emph{Notes on {R}ibet's converse to {H}erbrand},
  Cyclotomic fields and related topics (Pune, 1999), Bhaskaracharya
  Pratishthana, Pune, 2000, pp.~273--284.

\bibitem[Kob88]{koblitz88}
Neal Koblitz, \emph{Primality of the number of points on an elliptic curve over
  a finite field}, Pacific J. Math. \textbf{131} (1988), no.~1, 157--165.

\bibitem[Liu06]{liu06}
Yu-Ru Liu, \emph{Prime analogues of the {E}rd{\H o}s-{K}ac theorem for elliptic
  curves}, J. Number Theory \textbf{119} (2006), no.~2, 155--170.

\bibitem[MM01]{miri01}
S.~Ali Miri and V.~Kumar Murty, \emph{An application of sieve methods to
  elliptic curves}, Progress in cryptology---INDOCRYPT 2001 (Chennai), Lecture
  Notes in Comput. Sci., vol. 2247, Springer, Berlin, 2001, pp.~91--98.

\bibitem[MMS88]{murty88}
M.~Ram Murty, V.~Kumar Murty, and N.~Saradha, \emph{Modular forms and the
  {C}hebotarev density theorem}, Amer. J. Math. \textbf{110} (1988), no.~2,
  253--281.

\bibitem[{PAR}08]{PARI2}
{PARI~Group}, Bordeaux, \emph{{PARI/GP, version {\tt 2.3.4}}}, 2008, \newline
  available from \url{http://pari.math.u-bordeaux.fr/}.

\bibitem[Ram16]{ramanujan16}
S.~Ramanujan, \emph{On certain arithmetical functions [{T}rans. {C}ambridge
  {P}hilos. {S}oc. {\bf 22} (1916), no. 9, 159--184]}, Collected papers of
  {S}rinivasa {R}amanujan, AMS Chelsea Publ., Providence, RI, 2000,
  pp.~136--162.

\bibitem[Rib77]{ribet76}
Kenneth~A. Ribet, \emph{Galois representations attached to eigenforms with
  {N}ebentypus}, Modular functions of one variable, {V} ({P}roc. {S}econd
  {I}nternat. {C}onf., {U}niv. {B}onn, {B}onn, 1976), Springer, Berlin, 1977,
  pp.~17--51. Lecture Notes in Math., Vol. 601. \MR{MR0453647 (56 \#11907)}

\bibitem[Rib85]{ribet85}
\bysame, \emph{On {$l$}-adic representations attached to modular forms. {II}},
  Glasgow Math. J. \textbf{27} (1985), 185--194. \MR{MR819838 (88a:11041)}

\bibitem[Ser69]{serre69}
Jean-Pierre Serre, \emph{Une interpr\'etation des congruences relatives \`a la
  fonction {$\tau $} de {R}amanujan}, S\'eminaire Delange-Pisot-Poitou:
  1967/68, Th\'eorie des Nombres, Fasc. 1, Exp. 14, Secr\'etariat
  math\'ematique, Paris, 1969, p.~17.

\bibitem[Ser73]{serre73}
\bysame, \emph{Congruences et formes modulaires [d'apr\`es {H}. {P}. {F}.
  {S}winnerton-{D}yer]}, S\'eminaire Bourbaki, 24e ann\'ee (1971/1972), Exp.
  No. 416, Springer, Berlin, 1973, pp.~319--338. Lecture Notes in Math., Vol.
  317.

\bibitem[Ste04]{mfd}
W.~Stein, \emph{The {M}odular {F}orms {D}atabase},
  \url{http://modular.fas.harvard.edu/Tables} (2004).

\bibitem[Ste08]{sage}
William Stein, \emph{{Sage}: {O}pen {S}ource {M}athematical {S}oftware
  ({V}ersion 2.10.2)}, The Sage~Group, 2008, {\tt http://www.sagemath.org}.



\bibitem[SW05a]{steuding05}
J{\"o}rn Steuding and Annegret Weng, \emph{On the number of prime divisors of the order of elliptic curves
  modulo {$p$}}, Acta Arith. \textbf{117} (2005), no.~4, 341--352.

\bibitem[SW05b]{steuding05e}
\bysame, \emph{Erratum: ``{O}n the number of prime
  divisors of the order of elliptic curves modulo {$p$}'' [{A}cta {A}rith. {\bf
  117} (2005), no. 4, 341--352;]}, Acta Arith. \textbf{119} (2005), no.~4,
  407--408.

\bibitem[Unk]{tablesofforms}
Unknown, \emph{Newforms with rational integer coefficients},
  \url{http://www.lfunctions.org/degree2/degree2hm/integer.html}.

\bibitem[Was86]{washington86}
Lawrence~C. Washington, \emph{Some remarks on {C}ohen-{L}enstra heuristics},
  Math. Comp. \textbf{47} (1986), no.~176, 741--747.

\end{thebibliography}
%%%%%%%% Remove this before submission
%
%\todos %%%
%
%%%%%%%%%%%%%%%%%%%%%%%%%%%%%%%%%%%%%%
\providecommand{\bysame}{\leavevmode\hbox to3em{\hrulefill}\thinspace}
\providecommand{\MR}{\relax\ifhmode\unskip\space\fi MR }
% \MRhref is called by the amsart/book/proc definition of \MR.
\providecommand{\MRhref}[2]{%
  \href{http://www.ams.org/mathscinet-getitem?mr=#1}{#2}
}
\providecommand{\href}[2]{#2}

\end{document}